\documentclass[reqno,11pt]{amsart}
\usepackage[english]{babel}
\usepackage{amsmath,amsfonts,amsthm,amssymb,amsbsy,upref,color,graphicx,hyperref,enumerate,comment}
\usepackage[a4paper,margin=2.75truecm]{geometry}
\usepackage{stix}
\usepackage{tikz}
\usepackage{soul}
\usepackage{verbatim}
\DeclareMathOperator{\diam}{diam}
\DeclareMathOperator{\dive}{div}

\def\eps{{\varepsilon}}
\def\N{\mathbb{N}}
\def\O{\Omega}
\def\R{\mathbb{R}}

\def\HH{\mathcal{H}}

\def\la{\lambda}

\newcommand{\raggiocerchio}{2.7cm}
\newcommand{\distanzapunti}{6.1mm}
\newcommand{\MM}{9} 

\newcommand{\be}{\begin{equation}}
\newcommand{\ee}{\end{equation}}
\newcommand{\bib}[4]{\bibitem{#1}{\sc#2: }{\it#3. }{#4.}}
\def\res{\mathop{\hbox{\vrule height 7pt width .5pt depth 0pt \vrule height .5pt width 6pt depth 0pt}}\nolimits}

\newcommand{\wto}{\rightharpoonup}
\numberwithin{equation}{section}
\theoremstyle{plain}

\newtheorem{theo}{Theorem}[section]
\newtheorem{lemm}[theo]{Lemma}
\newtheorem{coro}[theo]{Corollary}
\newtheorem{prop}[theo]{Proposition}

\theoremstyle{remark}
\newtheorem{rema}[theo]{Remark}

\title[Inequalities between torsional rigidity and principal eigenvalue of the $p$-Laplacian]{Inequalities between torsional rigidity and principal eigenvalue of the $p$-Laplacian}

\author[L. Briani]{Luca Briani}

\author[G. Buttazzo]{Giuseppe Buttazzo}

\author[F. Prinari]{Francesca Prinari}

\date{}

\begin{document}

\begin{abstract}
We consider the torsional rigidity and the principal eigenvalue related to the $p$-Laplace operator. The goal is to find upper and lower bounds to products of suitable powers of the quantities above in various classes of domains. The limit cases $p=1$ and $p=\infty$ are also analyzed, which amount to consider the Cheeger constant of a domain and functionals involving the distance function from the boundary.
\end{abstract}

\maketitle

\textbf{Keywords:} torsional rigidity, shape optimization, principal eigenvalue, convex domains, Cheeger constant.

\textbf{2010 Mathematics Subject Classification:} 49Q10, 49J45, 49R05, 35P15, 35J25.

\section{Introduction}\label{sintro}

In this paper we consider the problem of minimizing or maximizing the quantity
$$\lambda_p^\alpha(\O)T_p^\beta(\O)$$
on the class of open sets $\O\subset\R^d$ having a prescribed Lebesgue measure, where $\alpha,\beta$ are two real parameters, and $\lambda_p(\O)$, $T_p(\O)$ are respectively the principal eigenvalue and the torsional rigidity, which are defined below, relative to the $p$-Laplace operator
$$\Delta_p u:=\dive\big(|\nabla u|^{p-2}\nabla u\big).$$

In all the paper, we use the following notation:
\begin{itemize}
\item $p'$ is the conjugate exponent of $p$ given by $p':=p/(p-1)$;
\item $\O\subset\R^d$ is an open set with finite Lebesgue measure $|\O|$;
\item $d_\O$ is the distance function from $\partial\O$
$$d_\O(x):=\inf\big\{|x-y|\ :\ y\in \partial\O \big\};$$
\item $\rho(\O)$ is the inradius of $\O$
$$\rho(\O):=\|d_\O\|_{L^\infty(\O)},$$
corresponding to the maximal radius of a ball contained in $\O$;
\item $\diam(\O)$ is the diameter of $\O$
$$\diam(\O):=\sup\big\{|x-y|\ :\ x,y\in\O\big\};$$
\item $P(\O)$ is the distributional perimeter of $\O$ in the De Giorgi sense, defined by
$$P(\O):=\sup\left\{\int_\O\dive\phi\,dx\ :\ \phi\in C^1_c(\R^d;\R^d),\ \|\phi\|_{L^\infty(\R^d)}\le1\right\};$$
\item $h(\O)$ is the Cheeger constant of $\O$, that we define in Section \ref{sp1};
\item $B_r$ is the open ball centered at the origin with radius $r$ in $\R^d$ and $\omega_d:=|B_1|$;
\item $\HH^{d-1}$ is the $d-1$ dimensional Hausdorff measure.
\end{itemize}

Given $1<p<\infty$, $T_p(\O)$ denotes the $p$-torsional rigidity of $\O$, defined by
\be\label{def.tor}
T_p(\O)=\max\bigg\{\Big[\int_\O|u|\,dx\Big]^{p}\Big[\int_\O|\nabla u|^p\,dx\Big]^{-1}\ :\ u\in W^{1,p}_0(\O),\ u\ne0\bigg\},
\ee
where $W^{1,p}_0(\O)$ stands for the usual Sobolev space obtained as the completion of the space $C^{\infty}_c(\O)$ with respect to the norm $\|\nabla u\|_{L^p(\O)}$. Equivalently, if $w_p$ is the unique weak solution of the nonlinear PDE
\be\label{eq.pdetor}
\begin{cases}
-\Delta_p w=1&\hbox{in }\O,\\
w\in W^{1,p}_0(\O),
\end{cases}
\ee
we can define $T_p(\O)$ as (see \cite{bra14}, Proposition 2.2):
\be\label{def.tor2}
T_p(\O)=\bigg(\int_\O w_p dx\bigg)^{p-1}.
\ee
Note that $w_p$ is a nonnegative function and \eqref{eq.pdetor} is the Euler-Lagrange equation of the variational problem
$$\min\Big\{J_p(u)\ :\ u\in W^{1,p}_0(\O)\Big\},$$
where
\be\label{def.Jp}
J_p(u):=\frac1p\int_\O|\nabla u|^p\,dx-\int_\O u\,dx.
\ee
Multiplicating by $w_p$ in \eqref{eq.pdetor} and integrating by parts gives
$$\int_\O w_p\,dx=\int_\O |\nabla w_p|^p\,dx=-p'J_p(w_p).$$
When $\O=B_1$, the solution $w_p$ to the boundary problem \eqref{eq.pdetor} is explicit and given by
\be\label{soltball}
w_{p}(x)=\frac{1-|x|^{p'}}{p'd^{1/(p-1)}}
\ee
which leads to
$$T_p(B_1)=\frac1d\Big(\frac{\omega_d}{p'+d}\Big)^{p-1}.$$

The $p$-principal eigenvalue $\lambda_p(\O)$ is defined through the Rayleigh quotient 
\be\label{def.lambda}
\lambda_p(\O)=\min\bigg\{\Big[\int_\O|\nabla u|^p\,dx\Big]\Big[\int_\O |u|^{p}\,dx\Big]^{-1}\ :\ u\in W^{1,p}_0(\O),\ u\ne0\bigg\}.
\ee
Equivalently, $\lambda_p(\O)$ denotes the least value $\lambda$ such that the nonlinear PDE
$$\begin{cases}
-\Delta_p u=\lambda |u|^{p-2}u&\hbox{in }\O,\\
u\in W^{1,p}_0(\O),
\end{cases}$$
has a nonzero solution; we recall that in dimension $1$ we have (see for instance \cite{Kaji})
\be\label{lball}
\lambda_p(-1,1)=\left(\frac{\pi_p}{2}\right)^p\qquad\text{where }\pi_p=2\pi\frac{(p-1)^{1/p}}{p\sin(\pi/p)},
\ee
while in higher dimension the following estimate holds true, see (\cite{H} Theorem 3.1):
\be\label{lballHD}
\la_p(B_1)\le \frac{(p+1)(p+2)\dots (p+d)}{d!}\; .
\ee

It is easy to see that the two quantities above scale as
\be\label{scaling}
\lambda_p(t\O)=t^{-p}\lambda_p(\O),\qquad T_p(t\O)=t^{p+d(p-1)}T_p(\O).
\ee
By using a symmetrization argument and the so-called P\'olya-Szeg\"o principle (see \cite{he06}) it is possible to prove that balls maximize (respectively minimize) $T_p$ (respectively $\lambda_p$) among all sets of prescribed Lebesgue measure, which can be written in a scaling free form as
\be\label{FK}
\lambda_p(B)|B|^{p/d}\le\lambda_p(\O)|\O|^{p/d},\qquad T_p(\O)|\O|^{1-p-p/d}\le T_p(B)|B|^{1-p-p/d},
\ee
where $B$ is any ball in $\R^d$. The inequalities \eqref{FK} are known respectively as \textit{Faber-Krahn inequality} and \textit{Saint-Venant inequality}. 

Moreover, we have:
\be\label{inftorall}
\inf\big\{T_p(\O)\ :\ \O\text{ open in }\R^d,\ |\O|=1\big\}=0,
\ee
\be\label{suplaall} 
\sup\big\{\lambda_p(\O)\ :\O\text{ open in }\R^d,\ |\O|=1\big\}=+\infty.
\ee
To prove \eqref{inftorall} and \eqref{suplaall} it is enough to take into account of the scaling properties \eqref{scaling} and use the fact that if $\O$ is the disjoint union of a family of open sets $\O_i$ with $i\in I$, then
\be\label{eq.toradd}
T_p^{1/(p-1)}(\O)=\sum_{i\in I}T_p^{1/(p-1)}(\O_i),\qquad\lambda_p(\O)=\inf_{i\in I}\lambda_p(\O_i).
\ee
Then, choosing $\O_n$ as the disjoint union of $n$ balls with measure $1/n$ each and taking the limit as $n\to\infty$, gives
$$T_p(\O_n)=\omega_d^{1-p-p/d}n^{-p/d}T_p(B_1)\to0$$
and
$$\lambda_p(\O_n)=\omega^{p/d}n^{p/d}\lambda_p(B_1)\to+\infty.$$

Thus, a characterization of $\inf/\sup$ of the quantity $\lambda_p^\alpha(\O)T_p^\beta(\O)$, among the domains $\O\subset\R^d$ with unitary measure, 
when $\alpha=0$ or $\beta=0$ or $\alpha\beta<0$, follows by \eqref{FK}, \eqref{inftorall} and \eqref{suplaall}. 

It remains to consider the case $\alpha>0$ and $\beta>0$. Setting $q=\beta/\alpha>0$ we can limit ourselves to deal with the quantity
$$\lambda_p(\O)T_p^q(\O)\,.$$

Using the scaling properties \eqref{scaling} we can remove the constraint of prescribed Lebesgue measure on $\O$ by normalizing the quantity $\lambda_p(\O)T_p^q(\O)$, multiplying it by a suitable power of $|\O|$. We then end up with the scaling invariant shape functional
$$F_{p,q}(\O)=\frac{\lambda_p(\O)T_p^q(\O)}{|\O|^{\alpha(p,q,d)}}\qquad\text{with }\alpha(p,q,d):=q(p-1)+\frac{p(q-1)}{d}\;,$$
that we want to minimize or maximize over the class of open sets $\O\subset\R^d$ with $0<|\O|<\infty$.

The limit cases, when $p=1$ and $p=+\infty$, are also meaningful. When $p\to 1$ the quantities $\lambda_p(\O)$ and $T_p(\O)$ are related to the notion of Cheeger constant $h(\O)$, see definition \eqref{def.cheeger}. In particular we obtain as a natural ``limit'' functional 
$$F_{1,q}(\O)=\left(h(\O)|\O|^{1/d}\right)^{1-q}$$
whose optimization problems are well studied in the literature. Concerning the case $p=+\infty$, we show that the family $F_{p,q}^{1/p}$ pointwise converges, as $p\to\infty$, to the shape functional
$$F_{\infty,q}(\O)=\frac{\big(\intbar_\O d_{\O}(x)\,dx\big)^q}{\rho(\O)|\O|^{(q-1)/d}},$$
and we study the related optimization problems in the classes of all domains $\O$ and in the one of convex domains.

The study of the functionals $F_{p,q}$ has been already considered in the literature. The case when $p=2$ has been extensively discussed in \cite{bbp20}, \cite{bbv15}, \cite{bfnt16} and \cite{bfnt19} (see also \cite{bra20}) and our results can be seen as natural extensions.

The paper is organized as follows. In the first three sections we study the optimization problems for $F_{p,q}$, when $1<p<\infty$ and in different classes of domains. More precisely: in Section \ref{sgene} we consider the class of all open sets of $\R^d$ with finite Lebesgue measure, in Section \ref{sconvex} we consider the class of bounded convex open sets and in Section \ref{sthin} that of thin domains which will be suitable defined. The analysis of the optimization problems in the extremal cases (respectively when $p=1$ and $p=+\infty$) are contained in Section \ref{sp1} and \ref{spinfty}. Finally Section \ref{further} contains a list of several open problems which we believe may be interest for future researches. For the sake of completeness we add an appendix section devoted to clarify the assumptions we use for the limit case of Section \ref{spinfty}.

\section{Optimization for general domains}\label{sgene}

The crucial inequality to provide a lower bound to $F_{p,q}$ is the Kohler-Jobin inequality, first proved for $p=2$ in \cite{kj78a,kj78b} and then for a general $p$ in \cite{bra14}, which asserts that balls minimize principal frequency  among all sets of prescribed torsional rigidity. More precisely we have
\be\label{eq.kjineq}
\lambda_p(B)T_p^{p'/(p'+d)}(B)\le\lambda_p(\O)T_p^{p'/(p'+d)}(\O).
\ee

\begin{prop}\label{prop.infall}
Let $1<p<+\infty$. Then
$$\begin{cases}
\min\big\{F_{p,q}(\O)\ :\ \O\text{ open in }\R^d,\ 0<|\O|<\infty\big\}=F_{p,q}(B)&\hbox{if }\ 0<q\le p'/(p'+d);\\
\inf\big\{F_{p,q}(\O)\ :\ \O\text{ open in }\R^d,\ 0<|\O|<\infty\big\}=0\,&\hbox{if }\ q>p'/(p'+d).
\end{cases}$$
where $B$ is any ball in $\R^d$.
\end{prop}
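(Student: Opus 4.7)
The plan is to set $q_* := p'/(p'+d)$ and split according to whether $q \le q_*$ or $q > q_*$. First I would note that the exponent $\alpha(p,q,d)$ is precisely chosen so that the scalings \eqref{scaling} render $F_{p,q}$ scale invariant; therefore $F_{p,q}(B)$ takes a common value on every ball $B \subset \R^d$.

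In the regime $0 < q \le q_*$, the Kohler-Jobin inequality \eqref{eq.kjineq} furnishes a scale-invariant lower bound for the product $\lambda_p T_p^{q_*}$, while the Saint-Venant inequality in \eqref{FK} controls $T_p$ from above once $|\Omega|$ is prescribed. I would therefore pick a ball $B$ with $|B| = |\Omega|$ and decompose
\begin{equation*}
\lambda_p(\Omega)\,T_p^q(\Omega) = \bigl(\lambda_p(\Omega)\,T_p^{q_*}(\Omega)\bigr)\,T_p^{q-q_*}(\Omega).
\end{equation*}
Kohler-Jobin bounds the first factor from below by $\lambda_p(B)\,T_p^{q_*}(B)$, and since $q - q_* \le 0$, Saint-Venant gives $T_p^{q-q_*}(\Omega) \ge T_p^{q-q_*}(B)$. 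Multiplying the two inequalities and dividing by $|\Omega|^{\alpha(p,q,d)} = |B|^{\alpha(p,q,d)}$ produces $F_{p,q}(\Omega) \ge F_{p,q}(B)$, with equality when $\Omega = B$.

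In the regime $q > q_*$, I would exhibit a vanishing sequence through disjoint unions of balls of two distinct scales:
\begin{equation*}
\Omega_R := B_R \sqcup \bigsqcup_{i=1}^{\lfloor R^{p'+d}\rfloor} B_1^{(i)},
\end{equation*}
the balls being pairwise disjoint. By the additivity formula \eqref{eq.toradd} and the scalings \eqref{scaling}, $\lambda_p(\Omega_R) = R^{-p}\lambda_p(B_1)$ (the large ball wins), while the identity $p'(p-1) = p$ yields $T_p(\Omega_R)^{1/(p-1)} \sim 2 R^{p'+d}\,T_p(B_1)^{1/(p-1)}$; finally $|\Omega_R| \sim R^{p'+d}\omega_d$. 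Substituting these three asymptotics into $F_{p,q}$ and using $(p'+d)(p-1) = p + d(p-1)$ to cancel several terms leaves
\begin{equation*}
F_{p,q}(\Omega_R) \sim C\,R^{-p(p'+d)(q-q_*)/d},
\end{equation*}
which tends to $0$ as $R \to \infty$ precisely when $q > q_*$.

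The only real work is the algebraic bookkeeping of the exponent in the second case: the powers of $R$ arising from $\lambda_p$, from $T_p^q$ and from $|\Omega_R|^{\alpha(p,q,d)}$ each carry nontrivial weights, and it is exactly the relation $(p'+d)(p-1) = p + d(p-1)$ that forces the final exponent to collapse into a clean multiple of $q - q_*$. Once this is verified, Kohler-Jobin delivers the sharp lower bound in the first case and the explicit sequence $\Omega_R$ settles the infimum in the second.
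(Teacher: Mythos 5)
Your proposal is correct and takes essentially the same route as the paper: the case $0<q\le p'/(p'+d)$ uses the identical Kohler--Jobin/Saint--Venant factorization, and your two-scale family $\Omega_R$ is, after dilation by $1/R$ and the scale invariance of $F_{p,q}$, exactly the paper's union of a unit ball with $N$ balls of radius $\eps=1/R$ normalized by $N\eps^{d+p'}=1$. The exponent bookkeeping $F_{p,q}(\Omega_R)\sim C\,R^{-p(p'+d)(q-q_*)/d}$ checks out.
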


\begin{proof}
We denote for the sake of brevity $\bar q=p'/(p'+d)$.
Notice that 
$$\alpha(p,q,d)=\frac{[d(p-1)+p][q-\bar q]}{d}$$
and thus
$$F_{p,q}(\O)=\lambda(\O)T^{\bar q}(\O)\Big[\frac{T(\O)}{|\O|^{[d(p-1)+p]/d}}\Big]^{q-\bar q}\,,$$
By Kohler-Jobin inequality \eqref{eq.kjineq} and Saint-Venant inequality \eqref{FK} we get the thesis for $0<q\le \bar q$. Now, let $\O$ be the disjoint union of $B_1$ and $N$ disjoint balls of radius $\eps\in (0,1]$. Taking into account of \eqref{eq.toradd} we have
$$F_{p,q}(\O)=F_{p,q}(B_1)\,\frac{(1+N\eps^{d+p/(p-1)})^{q(p-1)}}{(1+N\eps^d)^{(d(p-1)+p)(q-\bar q)/d}}\,.$$
Taking now $N\eps^{d+p/(p-1)}=1$ gives
$$F_{p,q}(\O)\le F_{p,q}(B_1)\frac{2^{q(p-1)}}{(1+\eps^{-p/(p-1)})^{(d(p-1)+p)(q-\bar q)/d}}\,,$$
which vanishes as $\eps\to0$ as soon as $q>\bar q$.
\end{proof}

In dealing with the supremum of $F_{p,q}$ a natural threshold arises from the \textit{Polya inequality} whose brief proof we recall.

\begin{prop}\label{prop.polya}
For every $\O\subset\R^d$ with $0<|\O|<+\infty$ and every $1<p<+\infty$ we have
\be\label{eq.polya}
F_{p,1}(\O)=\frac{\lambda_p(\O)T_p(\O)}{|\O|^{p-1}}\le 1.
\ee
\end{prop}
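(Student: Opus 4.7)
The plan is to use the torsion function $w_p$ (the solution of \eqref{eq.pdetor}) as a test function in the Rayleigh quotient defining $\lambda_p(\Omega)$, and then close the estimate with H\"older's inequality.

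First I would recall the two identities for $w_p$ already derived in the introduction: from \eqref{def.tor2}, $T_p(\Omega)=\left(\int_\Omega w_p\,dx\right)^{p-1}$, and from multiplying \eqref{eq.pdetor} by $w_p$ and integrating by parts,
\[
\int_\Omega|\nabla w_p|^p\,dx=\int_\Omega w_p\,dx.
\]
Plugging $w_p\in W^{1,p}_0(\Omega)$ into the Rayleigh quotient \eqref{def.lambda} then yields
\[
\lambda_p(\Omega)\,\int_\Omega w_p^p\,dx\ \le\ \int_\Omega|\nabla w_p|^p\,dx\ =\ \int_\Omega w_p\,dx.
\]

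Next I would apply H\"older's inequality with exponents $p$ and $p'$ to the right-hand side, using that $w_p\ge 0$:
\[
\int_\Omega w_p\,dx\ \le\ \Big(\int_\Omega w_p^p\,dx\Big)^{1/p}|\Omega|^{1/p'}.
\]
Raising to the power $p$ gives $\left(\int_\Omega w_p\,dx\right)^p\le\left(\int_\Omega w_p^p\,dx\right)|\Omega|^{p-1}$, so
\[
\int_\Omega w_p^p\,dx\ \ge\ \frac{\left(\int_\Omega w_p\,dx\right)^p}{|\Omega|^{p-1}}.
\]

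Combining the two displayed bounds,
\[
\lambda_p(\Omega)\,\frac{\left(\int_\Omega w_p\,dx\right)^p}{|\Omega|^{p-1}}\ \le\ \int_\Omega w_p\,dx,
\]
which rearranges to $\lambda_p(\Omega)\left(\int_\Omega w_p\,dx\right)^{p-1}\le |\Omega|^{p-1}$. Recalling \eqref{def.tor2}, this is exactly $\lambda_p(\Omega)T_p(\Omega)\le |\Omega|^{p-1}$, i.e. $F_{p,1}(\Omega)\le 1$.

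The argument is essentially a one-line test function computation once the integration-by-parts identity for $w_p$ is known; there is no real obstacle. The only mild point to be careful about is the regularity/integrability of $w_p$ needed to justify the integration by parts and the use of $w_p$ as admissible in \eqref{def.lambda}, but $w_p\in W^{1,p}_0(\Omega)$ together with nonnegativity is enough for both.
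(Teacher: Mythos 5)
Your proof is correct and is essentially identical to the paper's: both test the Rayleigh quotient with the torsion function $w_p$, use the identity $\int_\O|\nabla w_p|^p\,dx=\int_\O w_p\,dx$, and close with H\"older's inequality together with \eqref{def.tor2}. No further comment is needed.
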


\begin{proof}
Let $w_\O$ be the solution to \eqref{eq.pdetor}. By the definition of $\lambda_p(\O)$ and by H\"older inequality we have
$$\lambda_p(\O)\le \frac{\int_\O |\nabla w_p|^p dx}{\int_\O w_p^p dx}=\frac{\int_\O w_p dx}{\int_\O w_p^p dx}\le \frac{|\O|^{p-1}}{\left(\int_\O w_p dx \right)^{p-1}}.$$
The conclusion follows by \eqref{def.tor2}.
\end{proof}

\begin{prop}\label{prop.supall}
Let $1<p<\infty$. Then
$$\begin{cases}
\sup\big\{F_{p,q}(\O)\ :\ \O\text{ open in }\R^d,\ 0<|\O|<\infty\big\}=+\infty&\hbox{if }0<q<1;\\
\sup\big\{F_{p,q}(\O)\ :\ \O\text{ open in }\R^d,\ 0<|\O|<\infty\big\}\le T^{q-1}_p(B)/|B|^{(q-1)(p-1+p/d)}&\hbox{if }q\ge1.
\end{cases}$$
\end{prop}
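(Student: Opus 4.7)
The plan splits into the two regimes of $q$.

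For the case $q\ge 1$, I would factor out the Polya expression. Setting $\alpha(p,q,d)=q(p-1)+p(q-1)/d$ and $\alpha(p,1,d)=p-1$, one computes $\alpha(p,q,d)-\alpha(p,1,d)=(q-1)(p-1+p/d)$, so
$$F_{p,q}(\O)=F_{p,1}(\O)\cdot\left(\frac{T_p(\O)}{|\O|^{p-1+p/d}}\right)^{q-1}.$$
Now Proposition \ref{prop.polya} gives $F_{p,1}(\O)\le 1$, and when $q\ge 1$ the second factor is the $(q-1)$-th power of a nonnegative scaling-invariant quantity controlled by the Saint-Venant inequality in \eqref{FK}, namely $T_p(\O)/|\O|^{p-1+p/d}\le T_p(B)/|B|^{p-1+p/d}$. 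Combining these two bounds yields exactly the claimed upper estimate.

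For $0<q<1$, I would take $\O_n$ to be the disjoint union of $n$ equal balls of total measure $1$ (each of measure $1/n$) and show $F_{p,q}(\O_n)\to+\infty$. Using \eqref{scaling} and the additivity rules \eqref{eq.toradd}, direct computation gives
$$\lambda_p(\O_n)=\lambda_p(B_1)\,\omega_d^{p/d}\,n^{p/d},\qquad T_p(\O_n)=T_p(B_1)\,\omega_d^{-(p-1+p/d)}\,n^{-(p-1+p/d)+(p-1)}.$$
Since $|\O_n|=1$, $F_{p,q}(\O_n)=\lambda_p(\O_n)\,T_p^q(\O_n)$, and gathering powers of $n$ one obtains an exponent equal to $(p/d)(1-q)$, which is strictly positive precisely when $q<1$. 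Hence $F_{p,q}(\O_n)\to+\infty$.

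The routine but slightly delicate part is bookkeeping the exponents in the second regime to check that the Saint-Venant inequality is applied in the correct direction (it requires $q-1\ge0$ since Saint-Venant goes the ``wrong way'' for maximizing $T_p$). The Polya inequality of Proposition \ref{prop.polya} does the rest of the work; no new ingredients are needed beyond \eqref{FK}, \eqref{eq.toradd}, \eqref{scaling}, and \eqref{eq.polya}. The exponent calculation for the divergent sequence in the first regime is the only step where a careless sign could derail the argument, so I would double-check that $p/d+q(p-1)-q(p-1+p/d)=(p/d)(1-q)$ before concluding.
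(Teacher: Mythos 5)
Your proof is correct and follows essentially the same route as the paper: for $q\ge1$ the identical factorization $F_{p,q}=F_{p,1}\cdot\big(T_p(\O)/|\O|^{p-1+p/d}\big)^{q-1}$ combined with Polya and Saint--Venant, and for $0<q<1$ a disjoint union of $N$ equal balls (the paper takes unit radius rather than measure $1/N$, but by scale invariance of $F_{p,q}$ this is the same example). Your exponent $(p/d)(1-q)$ is in fact the correct one for this construction, and it is positive exactly when $q<1$, so the divergence argument goes through.
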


\begin{proof}
Let $\O_N$ be the disjoint union of $N$ balls of unitary radius. By \eqref{eq.toradd} we have
$$F_{p,q}(\O_N)=N^{(1-q)(p-1+p/d)}F_{p,q}(B_1).$$
Taking the limit as $N\to\infty$ we have $F_{p,q}(\O_N)\to+\infty$ whenever $0<q<1$. Moreover, when $q\ge 1$, using Proposition \ref{prop.polya} and the Saint-Venant inequality \eqref{FK}, we have
$$F_{p,q}(\O)=F_{p,1}(\O)\Big(\frac{T_p(\O)}{|\O|^{p-1+p/d}}\Big)^{q-1}\le\Big(\frac{T_p(B)}{|B|^{p-1+p/d}}\Big)^{q-1}\,.$$
which concludes the proof.
\end{proof}

When $p=2$ and $q=1$ the upper bound given in the Proposition \ref{prop.supall} is sharp as first proved in \cite{bfnt16}. Using the theory of \textit{capacitary measures}, a shorther proof was given in \cite{bbp20}. The latter extends, naturally, to the case when $p\le d$ and $q=1$ as we show in the proposition below. 

\begin{prop}\label{sharppolya}
Let $1<p\le d$. Then 
$$\sup\big\{F_{p,1}(\O)\ :\ \O\subset\R^d\hbox{ open, }0<|\O|<+\infty\big\}=1.$$
\end{prop}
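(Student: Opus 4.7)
\medskip
\noindent\textbf{Proof strategy.} The inequality $F_{p,1}(\O)\le 1$ is precisely Polya's inequality (Proposition \ref{prop.polya}), so the only content of the statement is to produce a sequence of admissible open sets along which $F_{p,1}\to 1$. The plan, which extends to the $p$-Laplacian the capacitary construction used for $p=2$ in \cite{bbp20}, is to work inside a fixed ball $B\subset\R^d$, perforate it with many tiny holes so that in the homogenisation limit the torsion and eigenvalue problems acquire an extra zero-order term $V|u|^{p-2}u$, and then let $V\to\infty$.

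Concretely, for $V>0$ and $\eps>0$ I would set
$$\O_{\eps,V}:=B\setminus\bigcup_{i\in I_\eps}\overline{B_{r_\eps}(x_i^\eps)},$$
where $(x_i^\eps)_i$ is an $\eps$-periodic array of points in $B$ and $r_\eps\to 0$ is tuned so that the $p$-capacity density of the holes tends to $V$: explicitly $r_\eps^{d-p}\simeq V\eps^d$ when $p<d$, and an exponentially small radius when $p=d$. The hypothesis $p\le d$ enters precisely here, because only in this regime does the $p$-capacity of small balls vanish with their radius, allowing one to prescribe any capacitary density in the limit. By the Dal Maso--Murat $\Gamma$-convergence theory for $p$-Dirichlet energies on perforated domains, the torsion functions $w_{\O_{\eps,V}}$, extended by zero to $B$, would then converge weakly in $W^{1,p}(B)$ to the unique $w^V\in W^{1,p}_0(B)$ solving
$$-\Delta_p w^V+V(w^V)^{p-1}=1\quad\text{in }B,$$
while the first eigenvalues would converge to $\la_p(B)+V$, the zero-order term merely shifting the spectrum. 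Since $|\O_{\eps,V}|\to|B|$, this gives
$$\lim_{\eps\to0} F_{p,1}(\O_{\eps,V})=\frac{\bigl(\la_p(B)+V\bigr)\bigl(\int_B w^V\,dx\bigr)^{p-1}}{|B|^{p-1}}.$$

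The final step is to let $V\to\infty$. As a singular perturbation, the solution $w^V$ tends to the constant $V^{-1/(p-1)}$ in the interior of $B$ (with a thin boundary layer), whence $\bigl(\int_B w^V\,dx\bigr)^{p-1}=|B|^{p-1}V^{-1}(1+o(1))$ and the right-hand side above tends to $1$. A diagonal extraction $\O_n:=\O_{\eps_n,V_n}$ with $V_n\to\infty$ and $\eps_n\to 0$ chosen rapidly enough then produces $F_{p,1}(\O_n)\to 1$. The main technical obstacle I foresee is the rigorous verification of the capacitary $\Gamma$-convergence for $p$-Dirichlet energies with vanishing obstacles in the range $p\le d$: this is classical but delicate, resting on explicit capacitary potentials for the perforating balls together with a careful gluing that produces the correct corrector.
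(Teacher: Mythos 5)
Your proposal follows essentially the same route as the paper: both pass to the capacitary measure $\mu_c=c\,dx\res B_1$ via perforated domains (the paper invokes the approximation construction of \cite{dmma95}), use $\lambda_p(\mu_c)=c+\lambda_p(B_1)$, and send $c\to\infty$. The only real difference is in the last step: where you propose a singular-perturbation analysis of the solution $w^V$ as $V\to\infty$, the paper simply lower-bounds $T_p(\mu_c)$ by testing with the explicit function $u_\delta=\min\{1,(1-|x|)/\delta\}$ and takes $c\to+\infty$ followed by $\delta\downarrow0$, which avoids any boundary-layer argument.
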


\begin{proof}
By repeating the construction made in \cite{dmma95} (see also Remark 4.3.11 and Example 4.3.12 of \cite{bubu05}, and references therein) we have that for every $p$\textit{-capacitary} measure $\mu$ (that is a nonnegative Borel measure, possibly taking the value $+\infty$, with $\mathrm{cap}_p(E)=0\Longrightarrow\mu(E)=0$) there exists a sequence $(\O_n)$ of (smooth) domains such that
$$\lambda_p(\O_n)\to\lambda_p(\mu),\qquad T_p(\O_n)\to T_p(\mu),\qquad|\O_n|\to|\{\mu<+\infty\}|,$$
where
\[\begin{split}
&\lambda_p(\mu)=\min\bigg\{\int|\nabla u|^p\,dx+\int u^p\,d\mu\ :\ u\in W^{1,p}(\R^d)\cap L^p_\mu,\ \int u^p\,dx=1\bigg\},\\
&T_p(\mu)=\max\bigg\{\bigg[\int u\,dx\bigg]^p\bigg[\int|\nabla u|^p\,dx+\int u^p\,d\mu\bigg]^{-1}\ :\ u\in W^{1,p}(\R^d)\cap L^p_\mu\setminus\{0\}\bigg\}.
\end{split}\]
Taking the ball $B_1$ and $\mu_c=c\,dx\res B_1$ for every $c>0$, we have
\[
\sup_\O\bigg[\frac{\lambda_p(\O)T_p(\O)}{|\O|^{p-1}}\bigg]=\sup_\mu\bigg[\frac{\lambda_p(\mu)T_p(\mu)}{|\{\mu<+\infty\}|^{p-1}}\bigg]\ge\sup_{c>0}\bigg[\frac{\lambda_p(\mu_c)T_p(\mu_c)}{|B_1|^{p-1}}\bigg]\,.
\]
Clearly $\lambda_p(\mu_c)=c+\lambda_p(B_1)$. Now, consider for $\delta>0$ the function
\[u_\delta(x)=
\begin{cases}
1&\hbox{if }|x|\le 1-\delta,\\
(1-|x|)/\delta&\hbox{if }|x|>1-\delta\,.
\end{cases}\]
We have
\[\begin{split}
T_p(\mu_c)
&\ge\Big[\int_{B_1}u_\delta\,dx\Big]^p\Big[\int_{B_1}|\nabla u_\delta|^p\,dx+c\int_{B_1}u_\delta^p\,dx\Big]^{-1}\\
&\ge\Big[\omega_d(1-\delta)^d\Big]^p\Big[\delta^{-p}\omega_d\big(1-(1-\delta)^d\big)+c\omega_d\Big]^{-1}\\
&\ge\omega_d^{p-1}(1-\delta)^{pd}\Big[\delta^{-p}+c\Big]^{-1}\,.
\end{split}\]
Therefore
$$\frac{\lambda_p(\mu_c)T_p(\mu_c)}{|B_1|^{p-1}}\ge\frac{(c+\lambda(B_1))(1-\delta)^{pd}}{\delta^{-p}+c}\,.$$
By letting $c\to+\infty$ and then $\delta\downarrow 0$ we obtain the thesis.
\end{proof}

\section{Optimization in Convex Domains}\label{sconvex}

We now deal with the optimization problems in the class of convex domains. Notice that adding in \eqref{inftorall} and in \eqref{suplaall} a convexity constraint on the admissible domains $\O$ does not change the values of $\inf$ and $\sup$. To see this one can take a unit measure normalization of the following convex domains ({\it slab shape})
\be\label{slab}
C_{A,\eps}:=A\times(-\eps,\eps)
\ee
being $A$ a convex $d-1$ dimensional open set with finite $d-1$ dimensional measure and use the following Lemma, which will be proved in a slightly more general version in Proposition \ref{stimethin} of Section \ref{sthin}.

\begin{lemm}\label{stime}
Let $A\subset\R^{d-1}$ be a bounded open set and let $\eps >0$. Let $C_{A,\eps}:=A\times(-\eps,\eps)$. Then we have
$$T_p(C_{A,\eps})\le \left(\HH^{d-1}(A)\right)^{p-1}\eps^{2p-1}\left(\frac{2}{p'+1}\right)^{p-1},\qquad\lambda_p(C_{A,\eps})\ge \eps^{-p}\left(\frac{\pi_p}{2}\right)^p,$$
where $\pi_p$ is given in \eqref{lball}. In addition, as $\eps\to0$, we have
$$T_p(C_{A,\eps})\approx\left(\HH^{d-1}(A)\right)^{p-1}\eps^{2p-1}\left(\frac{2}{p'+1}\right)^{p-1}, \qquad\la_p(C_{A,\eps})\approx \eps^{-p}\left(\frac{\pi_p}{2}\right)^p.$$
\end{lemm}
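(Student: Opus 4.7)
The plan is to handle the two pairs of estimates in turn: the non-asymptotic inequalities (upper bound on $T_p$, lower bound on $\lambda_p$) via one-dimensional comparisons, and the matching reverse bounds as $\eps\to 0$ via an explicit test function with a boundary cutoff.

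For the lower bound on $\lambda_p$, I would slice. For any $u\in W^{1,p}_0(C_{A,\eps})$, Fubini ensures that $u(x',\cdot)\in W^{1,p}_0(-\eps,\eps)$ for a.e.\ $x'\in A$, so the $1$-D Poincar\'e inequality
$$\int_{-\eps}^{\eps}|u(x',t)|^p\,dt\le\frac{1}{\lambda_p(-\eps,\eps)}\int_{-\eps}^{\eps}|\partial_t u(x',t)|^p\,dt$$
applies; integrating in $x'$ and bounding $|\partial_t u|\le|\nabla u|$ yields $\lambda_p(C_{A,\eps})\ge\lambda_p(-\eps,\eps)=\eps^{-p}(\pi_p/2)^p$ by the scaling \eqref{scaling} and \eqref{lball}. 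For the upper bound on $T_p$, I would use the weak comparison principle for the $p$-Laplacian. Let $\phi(t)=(\eps^{p'}-|t|^{p'})/p'$ be the $1$-D torsion function on $(-\eps,\eps)$ and set $\Phi(x',t):=\phi(t)$. Then $-\Delta_p\Phi=-(|\phi'|^{p-2}\phi')'=1=-\Delta_p w_p$ on $C_{A,\eps}$, and $\Phi\ge 0=w_p$ on $\partial C_{A,\eps}$, so by comparison $w_p\le\Phi$. Integrating gives
$$\int_{C_{A,\eps}} w_p\,dx\le\HH^{d-1}(A)\int_{-\eps}^{\eps}\phi(t)\,dt=\HH^{d-1}(A)\,\frac{2\eps^{p'+1}}{p'+1},$$
and raising to the $(p-1)$-th power yields the claimed bound, since $(p'+1)(p-1)=2p-1$.

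For the asymptotics, I would produce matching bounds with explicit test functions. Let $A_\delta=\{x'\in A:\dist(x',\partial A)>\delta\}$ and pick $\eta_\delta\in C^\infty_c(A)$ with $0\le\eta_\delta\le 1$, $\eta_\delta\equiv 1$ on $A_\delta$, and $|\nabla\eta_\delta|\le C/\delta$. For the torsion, use $u_\delta(x',t)=\eta_\delta(x')\phi(t)$ as a trial function in \eqref{def.tor}. Its numerator satisfies $(\int u_\delta)^p\ge(|A_\delta|\int_{-\eps}^{\eps}\phi)^p$. For the denominator, split into $A_\delta\times(-\eps,\eps)$, where $|\nabla u_\delta|^p=|\phi'|^p$ exactly, and $(A\setminus A_\delta)\times(-\eps,\eps)$, where the elementary bound $|\nabla u_\delta|^p\le 2^{p-1}(|\phi'|^p+|\nabla\eta_\delta|^p\phi^p)$ gives a correction of order $|A\setminus A_\delta|(\int|\phi'|^p+\delta^{-p}\int\phi^p)$. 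Using $\int|\phi'|^p=\int\phi=O(\eps^{p'+1})$ and $\int\phi^p=O(\eps^{pp'+1})$, any choice $\delta=\delta(\eps)$ with $\eps^{p'}\ll\delta\ll 1$ makes the correction $o(\int|\phi'|^p)$, and one recovers
$$T_p(C_{A,\eps})\ge |A_\delta|^{p-1}\Big(\tfrac{2\eps^{p'+1}}{p'+1}\Big)^{p-1}(1+o(1)),$$
which matches the upper bound as $\eps\to 0$. An entirely analogous construction with $\psi_\eps$ replaced by the first Dirichlet eigenfunction of $-\Delta_p$ on $(-\eps,\eps)$ gives $\lambda_p(C_{A,\eps})\le\lambda_p(-\eps,\eps)(1+o(1))$.

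The main technical obstacle is the control of the boundary-layer contribution in the asymptotic step: one must balance the smallness of the cutoff region $A\setminus A_\delta$ against the blow-up $\delta^{-p}$ of $|\nabla\eta_\delta|^p$. This is precisely what forces the two-scale choice $\eps^{p'}\ll\delta\ll 1$, and requires only that $|A\setminus A_\delta|\to 0$ as $\delta\to 0$ (automatic for bounded open $A$) together with a mild upper bound (e.g.\ $|A\setminus A_\delta|\le C\delta$ from a Lipschitz hypothesis on $\partial A$, or, failing that, an inner approximation of $A$ by smooth sets $A'\subset\subset A$ followed by monotone convergence).
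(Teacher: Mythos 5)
Your argument is correct, and it is worth comparing it with the paper's, which does not prove Lemma~\ref{stime} directly but deduces it as the special case $h_-\equiv-1$, $h_+\equiv 1$ of Proposition~\ref{stimethin} on thin domains. Your treatment of $\lambda_p$ (slicing plus the one-dimensional Poincar\'e inequality on the fibers) and of the asymptotics (a product test function, one-dimensional profile times a horizontal cutoff, with the cross-derivative terms shown to be negligible) coincides in substance with the paper's; you merely make the two-scale choice of the cutoff width explicit, where the paper instead establishes the equivalence \eqref{equiv2} and then lets $\phi$ approximate $1_A$. The genuinely different step is the upper bound on $T_p$: the paper slices, bounds $\int\phi(x,\cdot)\,dy$ by the one-dimensional torsional rigidity of the fiber, integrates over $A$ and applies H\"older's inequality, whereas you invoke the weak comparison principle to dominate $w_p$ by the one-dimensional torsion profile $\Phi(x',t)=\phi(t)$ and integrate, obtaining the constant at once from $(p'+1)(p-1)=2p-1$. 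Your route is shorter and cleaner for a cylinder, but it exploits the flatness of the top and bottom: $\Phi$ ceases to satisfy $-\Delta_p\Phi=1$ once the height varies with $x$, so it does not yield the sharper $\big(\int_A h^{p'+1}\big)^{p-1}$ bound of Proposition~\ref{stimethin}, which is why the paper argues by slicing there. One small quantitative remark: for a general bounded open $A$ the condition that makes the boundary-layer term vanish is $(\eps/\delta)^p|A\setminus A_\delta|\to 0$, so without further hypotheses one should take $\eps\ll\delta\ll 1$ (e.g.\ $\delta=\sqrt\eps$); the window $\eps^{p'}\ll\delta$ you state is sufficient only together with the bound $|A\setminus A_\delta|\le C\delta$ that you mention at the end. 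This does not affect the validity of the proof.
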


By using the previous lemma we have also
$$\lim_{\eps\to0}F_{p,q}(C_{A,\eps})=
\begin{cases}
0&\hbox{ if }q>1,\\
+\infty&\hbox{ if }q<1.
\end{cases}$$
Hence the only interesting optimization problems in the class of convex domains are the following ones 
$$\inf\{F_{p,q}(\O)\ :\ \O\subset\R^d\hbox{ open, convex, bounded}\},\qquad\hbox{with }q\le1,$$
$$\sup\{F_{p,q}(\O)\ :\ \O\subset\R^d\hbox{ open, convex, bounded}\},\qquad\hbox{with }q\ge1.$$
We denote respectively by $m_{p,q}$ and $M_{p,q}$ the two quantities above. 

With the convexity constraint, the so called Hersch-Protter inequality holds (for a proof see for instance \cite{bra18} and \cite{Kaji}):
\be\label{HP1}
\lambda_p(\O)\ge\left(\frac{\pi_p}{2\rho(\O)}\right)^p.
\ee
Moreover, the $p$-torsional rigidity of a bounded convex open set satisfies the following generalization of Makai inequality (see \cite{DPGGLB}, Theorem 4.3):
\be\label{Tors-Inr}
\frac{T_p(\O)}{|\O|^{p-1}}\le\frac{\rho^p(\O)}{(p'+1)^{p-1}}.
\ee
Both inequalities are sharp and the equality is asymptotically attained by taking, for instance, the sequence $C_{A,\eps}$ of Lemma \ref{stime}. Taking advantage of \eqref{HP1} and \eqref{Tors-Inr} we can show the following bounds.

\begin{prop}\label{prop.p1convex}
Let $1<p<+\infty$. Then
\begin{align}
&m_{p,1}\ge\frac1d\Big(\frac{\pi_p}{2}\Big)^p\Big(\frac{1}{d+p'}\Big)^{p-1},\label{infp1conv}\\
&M_{p,1}\le\min\bigg\{1,\Big(\frac{1}{p'+1}\Big)^{p-1} \la_p(B_1)\bigg\}.\label{sup.p1conv}
\end{align}
\end{prop}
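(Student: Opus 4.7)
My plan is to handle the two inequalities separately. The upper bound \eqref{sup.p1conv} is almost immediate from the ingredients recalled just before the statement, while \eqref{infp1conv} requires one additional test-function computation.

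For \eqref{sup.p1conv}, the entry $1$ in the minimum is Polya's inequality \eqref{eq.polya}. For the other entry I would use the monotonicity of $\lambda_p$ under set inclusion: since $B_\rho\subset\O$, the scaling \eqref{scaling} gives $\lambda_p(\O)\le\lambda_p(B_\rho)=\lambda_p(B_1)/\rho^p$. Multiplying this by the Makai-type upper bound \eqref{Tors-Inr} makes the factor $\rho^p$ cancel and yields
$$F_{p,1}(\O)\le\frac{\lambda_p(B_1)}{\rho^p}\cdot\frac{\rho^p}{(p'+1)^{p-1}}=\frac{\lambda_p(B_1)}{(p'+1)^{p-1}},$$
which combined with Polya produces the minimum in \eqref{sup.p1conv}.

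For \eqref{infp1conv}, the Hersch--Protter inequality \eqref{HP1} already gives $\lambda_p(\O)\ge(\pi_p/(2\rho))^p$, so the proof reduces to establishing a \emph{lower} Makai-type estimate
$$T_p(\O)\ge\frac{\rho(\O)^p\,|\O|^{p-1}}{d\,(p'+d)^{p-1}}$$
for every bounded convex $\O\subset\R^d$; multiplying this by \eqref{HP1} and dividing by $|\O|^{p-1}$ then produces exactly the constant in \eqref{infp1conv}, which is saturated by any ball.

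To prove this lower bound I plan to test \eqref{def.tor} with a function modelled on the explicit torsion profile \eqref{soltball} of $B_1$. Translating so that an incenter of $\O$ sits at the origin, let $\mu_\O(x)=\inf\{t>0:\,x\in t\O\}$ be the Minkowski gauge of $\O$. Convexity of $\O$ makes $\mu_\O$ convex, positively $1$-homogeneous and subadditive; the inclusion $B_\rho\subset\O$ forces the pointwise bound $\mu_\O(x)\le|x|/\rho$ and hence, via subadditivity, the Lipschitz estimate $|\nabla\mu_\O|\le 1/\rho$ almost everywhere; and the $1$-homogeneity gives the coarea-type identity $|\{\mu_\O\le t\}|=t^d|\O|$ for every $t\in[0,1]$. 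Using the competitor $u(x)=(1-\mu_\O(x)^{p'})/p'\in W^{1,p}_0(\O)$ (which vanishes on $\{\mu_\O=1\}=\partial\O$) together with the identity $|\psi'(t)|^p=t^{p'}$ for $\psi(t)=(1-t^{p'})/p'$, a direct one-variable computation produces
$$\int_\O u\,dx=\frac{|\O|}{p'+d}\,,\qquad\int_\O|\nabla u|^p\,dx\le\frac{d\,|\O|}{\rho^p(p'+d)}\,,$$
and inserting these into \eqref{def.tor} gives the desired lower bound on $T_p(\O)$. The only delicate point I foresee is the Lipschitz control $|\nabla\mu_\O|\le 1/\rho$, which is precisely where both the convexity hypothesis and the inradius enter the argument; once that is granted, the remainder is routine.
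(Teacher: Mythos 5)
Your proposal is correct and follows essentially the same route as the paper: the upper bound via Polya's inequality together with Makai's estimate \eqref{Tors-Inr} and the domain monotonicity $\lambda_p(\O)\le\lambda_p(B_{\rho})$, and the lower bound by combining Hersch--Protter \eqref{HP1} with the torsion lower bound \eqref{Tpbasso}, which the paper also obtains by testing \eqref{def.tor} with the gauge-based competitor $1-j_\O^{p'}$. The only cosmetic difference is that you estimate $\int_\O|\nabla u|^p\,dx$ through the pointwise Lipschitz bound $|\nabla j_\O|\le1/\rho$ and the layer-cake formula, whereas the paper passes through the coarea formula and the boundary identity $|\nabla j_\O|^{-1}=x\cdot\nu_\O\ge\rho$; these are equivalent (and your phrasing correctly places the origin at an incenter, which is what the paper's ``$0\in\O$'' implicitly requires).
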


\begin{proof}
Let $\O\subset\R^{d}$ be any bounded convex set. Without loss of generality, we can suppose $0\in\O$. We denote by $j_{\O}(x)$ the \textit{Minkowski functional} (also known as \textit{gauge function}) of $\O$, that is
$$
j_{\O}(x):=\inf\left\{r>0: x\in r\O\right\}.
$$
The main properties of $j_{\O}$ are summarized in Lemma 2.3 of \cite{bra20}. In particular we recall that $j_\O$ is a convex, Lipschitz, $1$-positively homogeneous function, $\HH^{d-1}$-a.e. differentiable in $\partial\O$, and satisfies
\be\label{min1}
|\nabla j_{\O}(x)|^{-1}=x\cdot\nu_{\O}(x), \qquad \text{for }\HH^{d-1}\text{-a.e. }x\in\partial\O,
\ee
being $\nu_{\O}(x)$ the outer normal unit versor at the point $x\in\partial\O$.
We consider
$$u(x):=1-j^{p'}_\O(x)\in W^{1,p}_0(\O).$$
By using coarea formula \eqref{min1} and the divergence theorem it is easy to prove that
$$\int_\O u(x) dx=|\O|-\int_0^{1}t^{p'+d-1}dt\int_{\partial\O}|j_{\O}(x)|^{-1}d\HH^{d-1}(x)=\frac{p'}{d+p'}|\O|,$$
and
$$\int_\O|\nabla u(x)|^{p}dx=\frac{p'}{p+d}\int_{\partial\O}|\nabla j_{\O}(x)|^{p-1}d\HH^{d-1}(x)\ge \frac{dp'^{p}}{p'+d}|\O|\rho^{p}(\O),$$
where the last inequality follows by the fact that
$$\rho(\O)\le x\cdot \nu_{\O}(x), \text{for }\HH^{d-1}\text{-a.e. }x\in\partial\O,$$
see Lemma 2.1 in \cite{bra20}. Hence by testing \eqref{def.tor} with the function $u$ we have
\be\label{Tpbasso}
\frac{T_p(\O)}{\rho(\O)^p|\O|^{p-1}}\ge\frac1d\big(\frac{1}{ d+p'}\big)^{p-1}.
\ee
Taking into account \eqref{HP1}, we obtain
$$F_{p,1}(\O)\ge\frac1d\big(\frac{1}{ d+p'}\big)^{p-1}\la_p(\O)\rho(\O)^p\ge\frac1d \Big(\frac{\pi_p}{2}\Big)^p \big(\frac{1}{d+p'}\big)^{p-1},$$
which proves \eqref{infp1conv}.

To prove the second inequality we use \eqref{Tors-Inr} together with the inequality
$$\la_p(\O)\le \la_p(B_1)\rho(\O)^{-p},$$
to obtain
$$F_{p,1}(\O)\le\Big(\frac{1}{p'+1}\Big)^{p-1}\rho^p(\O)\la_p(\O)\le\Big(\frac{1}{p'+1}\Big)^{p-1}\la_p(B_1),$$
which, together with Proposition \ref{eq.polya}, gives \eqref{sup.p1conv}.
\end{proof}

\begin{rema}
We stress here that inequality \eqref{Tpbasso} has been already proved in \cite{bgm17} and \cite{DPGGLB}. However, their results are given in the more general anisotropic setting where the proofs become more involved.
\end{rema}

\begin{rema}
Combining inequalities \eqref{sup.p1conv} and \eqref{lballHD}, we obtain
$$F_{p,1}(\O)\le\Big(\frac{1}{p'+1}\Big)^{p-1}\frac{(p+1)(p+2)\cdots(p+d)}{d!}\le\frac{(1+p)^d}{2^{p-1}}.$$
Thereby, as soon as $p$ is large enough, we have $M_{p,1}<1$.
\end{rema}

When $q\ne1$ the values $m_{p,q}$ and $M_{p,q}$ are achieved by some optimal domains, as shown in the next theorem.

\begin{theo}
Let $1<p<+\infty$. Then 
$$\begin{cases}
m_{p,q}\ge m_{p,1}T^{q-1}_p(B)/|B|^{(d(p-1)+p)(q-1)/d}&\hbox{ if }q<1,\\
M_{p,q}\le M_{p,1}T^{q-1}_p(B)/|B|^{(d(p-1)+p)(q-1)/d}&\hbox{ if }q>1.
\end{cases}$$
Moreover, there exist convex domains $\O^m_{p,q}$ and $\O^M_{p,q}$ such that
$$\begin{cases}
F_{p,q}(\O^m_{p,q})=m_{p,q}&\hbox{ if }q<1,\\
F_{p,q}(\O^M_{p,q})=M_{p,q}&\hbox{ if }q>1.
\end{cases}$$
\end{theo}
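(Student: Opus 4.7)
The bounds on $m_{p,q}$ and $M_{p,q}$ follow quickly from the factorization
$$F_{p,q}(\O)=F_{p,1}(\O)\left(\frac{T_p(\O)}{|\O|^{p-1+p/d}}\right)^{q-1},$$
already exploited in the proof of Proposition \ref{prop.supall}. For $q<1$, the exponent $q-1$ is negative, so the Saint-Venant inequality \eqref{FK} (which bounds the bracketed quantity from above by its value at a ball $B$) reverses; combined with the uniform lower bound $F_{p,1}(\O)\ge m_{p,1}$ over bounded convex $\O$, this yields the claimed estimate on $m_{p,q}$. For $q>1$, the same factorization with both Saint-Venant and $F_{p,1}(\O)\le M_{p,1}$ applied in the same direction gives the upper bound on $M_{p,q}$.

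For the existence part, the plan is to take a minimizing (resp. maximizing) sequence $(\O_n)$ of bounded convex open sets. By the scaling invariance of $F_{p,q}$ I normalize $|\O_n|=1$, and by translation I place the incenter of $\O_n$ at the origin. The critical step is to prove that $\diam(\O_n)$ remains uniformly bounded. Suppose, for contradiction, $\diam(\O_n)\to\infty$. Since $\O_n$ is convex of unit volume and contains a ball of radius $\rho(\O_n)$ centered at the incenter, the convex hull of this inball with any point of $\O_n$ at distance at least $\diam(\O_n)/2$ from the incenter is a subregion of $\O_n$ of volume at least a dimensional constant times $\rho(\O_n)^{d-1}\diam(\O_n)$; this forces $\rho(\O_n)\to 0$. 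Makai's inequality \eqref{Tors-Inr} then gives $T_p(\O_n)\le\rho(\O_n)^p/(p'+1)^{p-1}\to 0$. Using the factorization together with the uniform bound $F_{p,1}(\O_n)\ge m_{p,1}>0$ from Proposition \ref{prop.p1convex}, for $q<1$ I obtain
$$F_{p,q}(\O_n)=F_{p,1}(\O_n)\,T_p(\O_n)^{q-1}\longrightarrow+\infty,$$
contradicting minimality; analogously $F_{p,q}(\O_n)\le T_p(\O_n)^{q-1}\to 0$ for $q>1$, contradicting the trivial lower bound $M_{p,q}\ge F_{p,q}(B)>0$.

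With $\diam(\O_n)$ uniformly bounded and $|\O_n|=1$, the Blaschke selection theorem produces a convex limit $\O$ of a subsequence in the Hausdorff distance; continuity of the Lebesgue measure on uniformly bounded convex sets ensures $|\O|=1$, so $\O$ is nondegenerate. The classical continuity of $\la_p$ and $T_p$ under Hausdorff convergence of convex sets with bounded diameter and volume uniformly bounded away from zero (which follows, e.g., from the Mosco-convergence of the associated Sobolev spaces, or directly by testing the Rayleigh and energy quotients with explicit cutoff functions) then gives $F_{p,q}(\O)=m_{p,q}$ (resp. $M_{p,q}$), furnishing the optimizer. The step I expect to require the most care is the diameter bound; via \eqref{Tors-Inr} it reduces to the purely geometric fact that convex sets of unit volume with diverging diameter have vanishing inradius, which is the essential ingredient.
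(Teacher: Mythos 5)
Your proof is correct, and it reaches the same two conclusions by a route that is parallel to, but not identical with, the paper's. The first part (the bounds on $m_{p,q}$ and $M_{p,q}$ via the factorization $F_{p,q}=F_{p,1}\cdot\big(T_p(\O)/|\O|^{p-1+p/d}\big)^{q-1}$ and Saint-Venant) is exactly the paper's argument. For the existence part, the paper instead normalizes the minimizing/maximizing sequence by $\rho(\O_n)=1$, uses the decomposition $F_{p,q}(\O)=F_{p,1}^q(\O)\,\lambda_p(\O)^{1-q}/|\O|^{p(q-1)/d}$ together with the Hersch--Protter inequality \eqref{HP1} to bound $|\O_n|$ from above, and then invokes the same cone-volume inequality \eqref{cones} you use to conclude $\sup_n\diam(\O_n)<\infty$. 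You normalize by $|\O_n|=1$, derive $\rho(\O_n)\to0$ from \eqref{cones} under the assumption $\diam(\O_n)\to\infty$, and then reach the contradiction through Makai's inequality \eqref{Tors-Inr} applied to $T_p$ rather than Hersch--Protter applied to $\lambda_p$; for $q>1$ you additionally use P\'olya's bound $F_{p,1}\le1$ in place of $M_{p,1}$, which is harmless. Both versions hinge on the same geometric fact and on one of the two sharp convex inradius inequalities, so neither is more general than the other; yours has the mild advantage of making the non-degeneracy of the Hausdorff limit immediate (the limit has unit volume), whereas the paper's inradius normalization makes the lower bound on $\lambda_p$ immediate. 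The final compactness and continuity step (Blaschke selection plus continuity of $\lambda_p$, $T_p$ and the volume on non-degenerate convex bodies) is invoked in the same way in both proofs.
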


\begin{proof}
The first part follows at once using Saint-Venant inequality \eqref{FK} togheter with the equality
$$F_{p,q}(\O)=F_{p,1}(\O)\Big(\frac{T_p(\O)}{|\O|^{p-1+p/d}}\Big)^{q-1}.$$
Concerning the existence of optimal convex domains, we can repeat the argument used in \cite{bbp20}. First we notice that
\be\label{ex1}
F_{p,q}(\O)=\frac{F^q_{p,1}(\O)\la_p(\O)^{1-q}}{|\O|^{p(q-1)/d}}.
\ee
Moreover, any convex open set $\O$ contains a two-sided cone with base area equal to a $d-1$ dimensional disk of radius $\rho(\O)$ and total height equal to $\diam(\O)$, hence
\be\label{cones}
|\O|\ge d^{-1}\omega_{d-1}\diam(\O)\rho(\O)^{d-1}.
\ee
Thus, suppose  $0<q<1$ and let $(\O_n)$ be a minimizing sequence for $F_{p,q}$ made up of convex domains. By scaling invariance we can suppose $\rho(\O_n)=1$. For $n$ large enough we have $F_{p,q}(\O_n)\le F_{p,q}(B)$. Using \eqref{HP1} and \eqref{ex1} we have
$$\frac{F_{p,q}(B)}{m^q_{p,1}}\ge \left(\frac{\pi_p}{2}\right)^{p(1-q)}|\O_n|^{p(1-q)/d}.$$
Combining the last estimate with \eqref{cones} we have
$$\sup_n\diam(\O_n)<+\infty.$$ 
Hence, up to translations, the whole sequence $(\O_n)$ is contained in a compact set and we can extract a subsequence $(\O_{n_k})$ which converges in both Hausdorff and co-Hausdorff distance to some $\O^m_{p,q}$ (see \cite{hepi05}, for details about these convergences). Using the well-known continuity properties for $\la_p$, $T_p$ and Lebesgue measure with respect to Hausdorff metrics on the class of bounded convex sets, we conclude that
$$m_{p,q}=\lim_{n\to\infty}F_{p,q}(\O_n)=F_{p,q}(\O^m_{p,q}).$$
If $q>1$ we can follow the similar strategy and consider a maximizing sequence $(\O_n)$ with unitary inradius. By \eqref{ex1} and \eqref{HP1} we have, for $n$ large enough,
$$F_{p,q}(B)\le F_{p,q}(\O_n)\le M^q_{p,1}\left(\frac{\pi_p}{2}\right)^{1-q}\left(\frac{1}{|\O_n|^{1/d}}\right)^{p(q-1)},$$
which, thanks to \eqref{cones}, implies again $\sup_n\diam(\O_n)<+\infty$.
\end{proof}

\section{Optimization for thin domains}\label{sthin}

In this section we study the optimization problems for the functionals $F_{p,1}$ in the class of the so-called thin domains, which has been already considered in \cite{bbp20} for $p=2$. By a thin domain we mean a family of open sets $(\O_\eps)_{\eps>0}$, of the form
\be\label{thin}
\O_\eps:=\big\{(x,y)\in A\times\R\ :\ \eps h_-(x)<y<\eps h_+(x)\big\}\; ,\ee
where $A$ is $(d-1)$-dimensional open set, $h_-,h_+$ are real bounded measurable functions defined on $A$ and $\eps$ is a small parameter. We assume $h_+\ge h_-$ and we denote by $h(x)$ the {\it local thickness function}
$$h(x)=h_+(x)-h_-(x)\ge0.$$
Moreover we say that the thin domain $(\O_\eps)_{\eps>0}$ is convex if the corresponding domain $A$ is convex and the local thickness function $h$ is concave. The volume of $\O_\eps$ is clearly given by
$$|\O_\eps|=\eps\int_A h(x)\,dx,$$
while we can compute the behaviour (as $\eps\to0$) of $T_p(\O_\eps)$ and $\la_p(\O_\eps)$ by means of the following proposition (in the case $p=2$ a more refined asymptotics can be found in \cite{bofr10}, \cite{bofr13}).
From now on, we write the norms $\|\cdot\|_{p}$, omitting the dependence on the domain. 

\begin{prop}\label{stimethin}
Let $A\subset\R^{d-1}$ be an open set with finite $\HH^{d-1}$-measure and $h_-,h_+\in C^{1}(A)$ with $h_+>h_-$. Let $\O_\eps$ be defined by \eqref{thin}. We have
\be\label{ineqthin}
T_p(\O_\eps)\le\frac{\eps^{2p-1}}{2^p}\left(\frac{1}{p'+1}\right)^{p-1}\left(\int_A h^{p'+1}dx\right)^{p-1},\qquad \la_p(\O_\eps)\ge\eps^{-p}\left(\frac{\pi_p}{\|h\|_\infty}\right)^p,
\ee
where $\pi_p$ is given in \eqref{lball}.
In addition, as $\eps\to0$, we have
\be\label{asythin}
T_p(\O_\eps)\approx\frac{\eps^{2p-1}}{2^p}\left(\frac{1}{p'+1}\right)^{p-1}\left(\int_A h^{p'+1}dx\right)^{p-1},\qquad
\lambda_p(\O_\eps)\approx\eps^{-p}\left(\frac{\pi_p}{\|h\|_\infty}\right)^p.
\ee
\end{prop}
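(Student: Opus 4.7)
The plan is to exploit thinness by reducing each of the four claims to the explicit one-dimensional formulas on the slice $\{x\}\times(\eps h_-(x),\eps h_+(x))$, which has length $\eps h(x)$. Scaling \eqref{scaling} applied to \eqref{soltball} with $d=1$ gives $T_p((-L/2,L/2))=L^{2p-1}/[2^p(p'+1)^{p-1}]$, while \eqref{lball} with scaling gives $\lambda_p((-L/2,L/2))=(\pi_p/L)^p$. Both inequalities in \eqref{ineqthin} will then follow by slicing and integrating, while the asymptotic equalities in \eqref{asythin} require constructing slice-wise near-optimal test functions whose cross-derivatives are subleading.

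For the upper bound on $T_p$, I would take any $u\in W^{1,p}_0(\Omega_\eps)$, use that $u(x,\cdot)\in W^{1,p}_0((\eps h_-(x),\eps h_+(x)))$ for a.e.\ $x$, and apply the slice-wise 1D torsion inequality
$$\Bigl(\int_{\eps h_-(x)}^{\eps h_+(x)}|u|\,dy\Bigr)^p\le\frac{(\eps h(x))^{2p-1}}{2^p(p'+1)^{p-1}}\int_{\eps h_-(x)}^{\eps h_+(x)}|\partial_y u|^p\,dy.$$
Taking $p$-th roots, integrating in $x$, and applying H\"older with exponents $p'$ and $p$ (using the identities $(2p-1)p'/p=p'+1$ and $p/p'=p-1$) yields $(\int_{\Omega_\eps}|u|)^p\le\frac{\eps^{2p-1}}{2^p(p'+1)^{p-1}}(\int_A h^{p'+1})^{p-1}\int_{\Omega_\eps}|\partial_y u|^p$, and then $|\partial_y u|\le|\nabla u|$ gives the claimed bound. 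For the lower bound on $\lambda_p$, the 1D Poincar\'e inequality on each slice reads $\int|\partial_y u|^p\,dy\ge(\pi_p/(\eps h(x)))^p\int|u|^p\,dy\ge(\pi_p/(\eps\|h\|_\infty))^p\int|u|^p\,dy$, and integrating in $x$ with $|\nabla u|\ge|\partial_y u|$ finishes the argument.

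For the asymptotic behavior of $T_p$, I would test with the slice-wise optimal 1D torsion profile
$$w_\eps(x,y)=\frac{(\eps h(x)/2)^{p'}-|y-c(x)|^{p'}}{p'},\qquad c(x)=\tfrac{\eps}{2}(h_+(x)+h_-(x)),$$
multiplied if necessary by a cutoff $\phi_n\in C^1_c(A)$ with $\phi_n\nearrow 1$ to enforce the lateral Dirichlet condition. Direct slice-wise integration gives $\int w_\eps=\int|\partial_y w_\eps|^p=\frac{\eps^{p'+1}}{2^{p'}(p'+1)}\int_A h^{p'+1}$; meanwhile the $C^1$ regularity of $h_\pm$ yields $|\partial_x w_\eps|\lesssim\eps^{p'}$ pointwise, hence $\int|\partial_x w_\eps|^p=O(\eps^{pp'+1})$, a factor $\eps^p$ smaller than $\int|\partial_y w_\eps|^p\sim\eps^{p'+1}$. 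A short split of the integral near the degenerate set $\{y=c(x)\}$ then confirms $\int|\nabla w_\eps|^p=(1+o(1))\int|\partial_y w_\eps|^p$, and plugging into the Rayleigh quotient $(\int w_\eps)^p/\int|\nabla w_\eps|^p=(\int w_\eps)^{p-1}(1+o(1))$ produces the matching lower bound on $T_p$; letting $n\to\infty$ removes the cutoff.

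For the asymptotic in $\lambda_p$, I would localize near a point where $h$ is close to its sup: for $\delta>0$, pick $x_0\in A$ with $h(x_0)>\|h\|_\infty-\delta$ and $r>0$ small enough that $h_-(x)<h_-(x_0)+\delta$ and $h_+(x)>h_+(x_0)-\delta$ on $B_r(x_0)\subset A$, so that the rectangle $B_r(x_0)\times\eps(h_-(x_0)+\delta,h_+(x_0)-\delta)\subset\Omega_\eps$. Testing on it with a separable $u(x,y)=\phi(x)\psi(y)$, with $\phi\in C^1_c(B_r(x_0))$ and $\psi$ the first 1D Dirichlet $p$-eigenfunction on the transverse interval, gives $|\phi\psi'|\sim\eps^{-1}$ and $|\phi'\psi|=O(1)$, so $|\nabla u|^p=|\phi\psi'|^p(1+O(\eps^2))$ and the Rayleigh quotient tends to $\int|\psi'|^p/\int|\psi|^p=(\pi_p/(\eps(h(x_0)-2\delta)))^p$; letting $\delta\to 0$ and pushing $h(x_0)\to\|h\|_\infty$ closes the asymptotic. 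I expect the main technical delicacy to lie in the $T_p$ asymptotic, namely combining the $x$-cutoff with the degeneracy of $\partial_y w_\eps$ on the midline; the $\lambda_p$ side is cleaner since the test function is genuinely separable.
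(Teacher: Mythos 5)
Your proposal is correct and follows essentially the same route as the paper: slice-wise reduction to the explicit one-dimensional torsion and eigenvalue formulas, H\"older in $x$ with exponents $p'$ and $p$ for the upper bound on $T_p$, and a test function built from the fiberwise optimal $1$D torsion profile (cut off in $x$) whose cross-derivatives are smaller by a factor $\eps^p$ in $L^p$. The only divergence is that for the $\lambda_p$ asymptotic the paper merely says ``similar arguments,'' whereas you supply a concrete localization on an inscribed thin rectangle near a near-maximum of $h$ with a separable test function; this is a legitimate completion of the sketch, and the only point to tidy is that the pointwise estimate $|\nabla u|^p=|\phi\psi'|^p(1+O(\eps^2))$ fails where $\psi'$ vanishes, so it should be replaced by an integrated (e.g.\ Minkowski-type) comparison, exactly as in your treatment of the midline degeneracy for $T_p$.
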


\begin{proof}
First we deal with inequalities \eqref{ineqthin}. Let $\phi\in C^{\infty}_c(\O_\eps)$; since the function $\phi(x,\cdot)$ is admissible to compute $T_p\left(\eps h_-(x),\eps h_+(x)\right)$, by \eqref{def.tor} we obtain
\be\label{equa} 
\begin{split}
\int_{\eps h_-}^{\eps h_+} \phi(x,\cdot)dy&\le T^{1/p}_p(\eps h_-(x),\eps h_+(x))\left(\int^{\eps h_+(x)}_{\eps h_-(x)}|\nabla_y\phi(x,\cdot)|^p dy\right)^{1/p}\\
&\le T^{1/p}_p(\eps h_-(x),\eps h_+(x))\left(\int^{\eps h_+(x)}_{\eps h_-(x)}|\nabla\phi(x,\cdot)|^p dy\right)^{1/p}.
\end{split}
\ee
Taking into account \eqref{soltball} we have
\be\label{Tlin}
T_p\left(\eps h_-(x),\eps h_+(x)\right)=\frac{\eps^{2p-1}}{2^p}\left(\frac{p-1}{2p-1}\right)^{p-1}h^{2p-1}(x),
\ee
and thus, integrating on $A$ in \eqref{equa}, we deduce
$$\left(\int_{\O_\eps}\phi(x,y)\,dxdy\right)^p\le\frac{\eps^{2p-1}}{2^p}\left(\frac{p-1}{2p-1}\right)^{p-1}\bigg[\int_A h^{(2p-1)/p}\bigg(\int^{\eps h_+(x)}_{\eps h_-(x)}|\nabla\phi(x,\cdot)|^p\,dy\bigg)^{1/p}dx\bigg]^p.$$
H\"older inequality now gives
$$\left(\int_{\O_\eps}\phi(x,y)\,dxdy\right)^p\le\frac{\eps^{2p-1}}{2^p}\left(\frac{p-1}{2p-1}\right)^{p-1}\left(\int_A h^{(2p-1)/(p-1)}dx\right)^{p-1}\int_{\O_\eps}|\nabla\phi(x,y)|^pdxdy.$$
Since $\phi$ is arbitrary and $p'+1=(2p-1)/(p-1)$, we conclude that
$$T_p(\O_\eps)\le\frac{\eps^{2p-1}}{2^p}\left(\frac{1}{p'+1}\right)^{p-1}\left(\int_A h^{p'+1}dx\right)^{p-1}.$$
To get the second inequality in \eqref{ineqthin} we notice that, by \eqref{def.lambda}, for every $\phi\in C^\infty_c(\O_\eps)$ we have
$$\la_p(\eps h_-(x),\eps h_+(x))\int_{\eps h_-(x)}^{h_+(x)}|\phi(x,\cdot)|^{p} dy\le \int_{\eps h_-(x)}^{\eps h_+(x)}|\nabla_y \phi(x,\cdot)|^p dy \le \int_{\eps h_-(x)}^{\eps h_+(x)}|\nabla \phi(x,\cdot)|^p dy.$$
Since
$$\la_p(\eps h_-(x),\eps h_+(x))= h^{-p}(x)\eps^{-p}\pi_p^p\geq \|h\|_{\infty}^{-p}\eps^{-p}\pi_p^p,$$
integrating on $A$ and minimizing on $\phi$, we obtain
$$\la_p(\O_\eps)\ge \|h\|_{\infty}^{-p}\eps^{-p}\pi_p^p.$$
We now prove  \eqref{asythin} for $T_p(\O_\eps)$. 
To this end we consider the function
$$w_\eps (x,y):=\eps^{p'} h^{p'}(x)w\left(\frac{y-\eps h_-(x)}{\eps h(x)}\right),$$
where $w$ denotes the solution to \eqref{eq.pdetor} when $\O=(0,1)$ and $d=1$ (for the sake of brevity we omit the dependence on $p$). Notice that $w_\eps(x,\cdot)$ solves \eqref{eq.pdetor} in the interval $(\eps h_-(x),\eps h+(x))$. In particular, by using \eqref{def.tor2} and \eqref{Tlin}, we have
$$\int_{\eps h_-(x)}^{\eps h_+(x)}w_\eps(x,y)dy=\int_{\eps h_-(x)}^{\eps h_+(x)}|\nabla_y w_\eps(x,y)|^pdy=
\frac{\eps^{p'+1}h^{p'+1}(x)}{2^{p/(p-1)}(p'+1)}$$
A simple computation shows that
$$\nabla_y w_\eps(x,y)=\eps^{p'-1}W_1(x,y),\qquad\nabla_x w_\eps(x,y)=-\eps^{p'-1}W_1(x,y)\frac{y\nabla h(x)}{h(x)}+\eps^{p'} W_2(x,y),$$
where 
$$W_1(x,y)=h^{p'-1}(x)w'\left(\frac{y-\eps h_-(x)}{\eps h(x)}\right),$$
and
$$W_2(x,y)=p'h^{p'-1}(x)\nabla h(x) w\left(\frac{y-\eps h_-(x)}{\eps h(x)}\right)-h^{p'}(x)w'\left(\frac{y-\eps h_-(x)}{\eps h(x)}\right)\nabla\left(\frac{h_-(x)}{h(x)}\right).$$
In particular
\[\begin{split}
&\int_{\eps h_-(x)}^{\eps h_+(x)}|\nabla w_\eps(x,y)|^pdy=\\
&\int_{\eps h_-(x)}^{\eps h_+(x)}\left\{\left|\eps^{p'-1}W_1(x,y)\right|^{2}+\left|\eps^{p'-1}W_1(x,y)\frac{y\nabla h(x)}{h(x)}+\eps^{p'}W_2(x,y)\right|^2\right\}^{p/2}dy.
\end{split}\]
By exploiting the change of variable $z=\frac{y-\eps h_-(x)}{\eps h(x)}$ in the latter identity, we conclude that, as $\eps\to 0$, 
$$\int_{\eps h_-(x)}^{\eps h_+(x)}|\nabla w_\eps(x,y)|^p\,dy\approx\int_{\eps h_-(x)}^{\eps h_+(x)}|\nabla_y w_\eps(x,y)|^pdy.$$ 

Let $\phi\in C^\infty_c(A)$. Since  the function $v(x,y)=\phi(x)w_\eps(x,y)$ is admissible in \eqref{def.tor}, we get
\be\label{equiv1}
T(\O_\eps)\ge\left(\int_{\O_\eps}w_\eps(x,y)\phi(x)\,dxdy\right)^p\left(\int_{\O_\eps}\left|\nabla\left( w_\eps(x,y)\phi(x)\right)\right|^p\,dxdy\right)^{-1}.\ee
Moreover, by using basically the same argument as above, we have also that 
\be\label{equiv2}
\int_{\O_\eps} \left|\nabla\left(w_\eps(x,y)\phi(x)\right)\right|^p\,dxdy\approx \int_{\O_\eps} \left|\nabla_y w_\eps(x,y)\right|^p\left|\phi(x)\right|^pdxdy,\qquad\hbox{as }\eps\to0.
\ee
By combining \eqref{equiv1} and \eqref{equiv2} we obtain
$$\lim_{\eps\to 0} \frac{T_p(\O_\eps)}{\eps^{2p-1}}\ge 
\lim_{\eps\to 0}\frac{1}{\eps^{2p-1}} \left(\int_{\O_\eps}w_\eps(x,y)\phi(x)\,dxdy\right)^p\left(\int_{\O_\eps}\left|\nabla_y w_\eps(x,y)\right|^p\left|\phi(x)\right|^p\,dxdy\right)^{-1}.$$
Finally, by taking $\phi$ which  approximates $1_A$ in $L^p(A)$ in the right hand side of the inequality above, we conclude that   
$$\lim_{\eps\to0}\frac{T_p(\O_\eps)}{\eps^{2p-1}}\ge\frac{1}{2^p}\left(\frac{1}{p'+1}\right)^{p-1}\left(\int_A h^{p'+1}dx\right)^{p-1},$$
and the thesis is achived taking into account \eqref{ineqthin}. The asymtotics in \eqref{asythin} for $\lambda_p$ can be treated with similar  arguments.
\end{proof}

Actually, by means of a density argument, we can drop the regularity assumptions on $h_+$ and $h_-$ and extend the formulas \eqref{ineqthin} and \eqref{asythin} to any family $(\O_\eps)_{\eps>0}$ defined as in \eqref{thin}, with $h_+$ and $h_-$ bounded and measurable functions. We thus have:
$$F_{p,1}(\O_\eps)=\frac{\la_p(\O_\eps)T_p(\O_\eps)}{|\O_\eps|^{p-1}}\approx\gamma_p\left(\|h\|^{p'}_\infty\int_A h\,dx\right)^{1-p}\left(\int_A h^{1+p'}dx\right)^{p-1},$$
where
$$\gamma_p=\left(\frac {\pi_p }{2}\right)^p\left(\frac{1}{p'+1}\right)^{p-1}.$$ 
We then define the functional $F_{p,1}$ on the thin domain $(\O_\eps)_{\eps>0}$ associated with the $d-1$ dimensional domain $A$ and the local thickness function $h$ by
\be\label{defFthin}
F_{p,1}(A,h)=\gamma_p\left(\frac{\int_A h^{p'+1}dx}{\|h\|^{p'}_\infty\int_A h\,dx}\right)^{p-1}.
\ee

Our next goal is to give a complete solution to the optimization problems for the functional $F_{p,1}$ in the class of convex thin domains. To this aim we recall the following result (see Theorem 6.2 in \cite{bor73}).

\begin{theo}\label{theo.borell2}
Let $E\subset\R^N$ be a bounded open convex set, such that $0\in E$ and let $1\le s<r<\infty$. Then for every continuous function $h:E\to\R^+$ satisfying
\be\label{concave}
h(\la x)\ge\la h(x)+(1-\la)\qquad\forall x\in E,\ \forall\la\in (0,1),
\ee
and such that $\|h\|_{L^\infty(E)}=1$, it holds
$$\int_E h^r(x)\,dx\ge C_{r,s}\int_E h^s(x)\,dx$$
where
$$C_{r,s}=\frac{\int_0^1 (1-t)^{N-1}t^r\,dt}{\int_0^1 (1-t)^{N-1}t^s\,dt}.$$
In addition, equality occurs if $E$ is a ball of radius $1$ and $h(x)=1-|x|$.
\end{theo}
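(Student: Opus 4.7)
The plan is to reduce the $N$-dimensional inequality to a one-dimensional Chebyshev-type estimate, integrating along rays through the origin.

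Setting $x = 0$ in \eqref{concave} gives $(1-\lambda)h(0) \ge 1-\lambda$, so $h(0) \ge 1$; combined with $\|h\|_{L^\infty(E)} = 1$ this yields $h(0) = 1$. Writing $x = \rho\omega$ with $\omega \in S^{N-1}$, and $R(\omega) = \sup\{\rho \ge 0 : \rho\omega \in E\}$, I first reduce to the one-dimensional inequality
\begin{equation*}
\int_0^{R(\omega)} f(\rho)^r \rho^{N-1}\,d\rho \ge C_{r,s}\int_0^{R(\omega)} f(\rho)^s \rho^{N-1}\,d\rho, \qquad f(\rho) := h(\rho\omega),
\end{equation*}
for each $\omega$; integration over $S^{N-1}$ then gives the claim. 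The ratio of the two sides is invariant under the rescaling $\rho = R(\omega) u$, so I may assume $R(\omega) = 1$ and work with $g(u) = f(R(\omega) u)$ on $[0,1]$, which inherits $g(0) = 1$, $g \le 1$, and the starshapedness $g(\lambda u) \ge \lambda g(u) + (1-\lambda)$.

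The starshapedness of $g$ produces two monotonicity facts. First, applied with $\lambda = u_1/u_2$ and $u = u_2$ it gives $g(u_1) \ge g(u_2) + (1-u_1/u_2)(1-g(u_2)) \ge g(u_2)$, so $g$ is non-increasing. Hence the distribution function $a(t) := \sup\{u \in [0,1] : g(u) > t\}$ satisfies $\{g > t\} = [0, a(t))$, and the layer-cake formula reads
\begin{equation*}
\int_0^1 g(u)^r u^{N-1}\,du = \frac{r}{N}\int_0^1 t^{r-1} a(t)^N\,dt,
\end{equation*}
with the analogous identity for $s$ and for the extremal $g(u) = 1-u$ (which gives $a(t) = 1-t$). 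Second, evaluating starshapedness at $u = a(t)$ and using monotonicity of $g$ gives $\lambda a(t) \le a(\lambda t + 1 - \lambda)$; substituting $\sigma = \lambda t + 1 - \lambda$ (hence $\lambda = (1-\sigma)/(1-t)$) rewrites this as
\begin{equation*}
\frac{a(\sigma)}{1-\sigma} \ge \frac{a(t)}{1-t}, \qquad 0 \le t \le \sigma < 1,
\end{equation*}
so $\chi(t) := \bigl(a(t)/(1-t)\bigr)^N$ is non-decreasing.

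Writing $a(t)^N = (1-t)^N \chi(t)$, and observing that a common factor $r/s$ cancels on both sides of the target inequality, the 1D claim reduces to
\begin{equation*}
\int_0^1 \phi\,\chi\,d\mu \cdot \mu([0,1]) \ge \int_0^1 \phi\,d\mu \cdot \int_0^1 \chi\,d\mu,
\end{equation*}
where $d\mu = t^{s-1}(1-t)^N\,dt$ and $\phi(t) = t^{r-s}$ (non-decreasing since $r > s$). As $\phi$ and $\chi$ are both non-decreasing, this is precisely Chebyshev's integral inequality, which follows from the nonnegativity of the symmetric double integral $\int\!\int(\phi(t)-\phi(t'))(\chi(t)-\chi(t'))\,d\mu(t)\,d\mu(t')$. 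Equality in the final statement is verified directly: for $h(x) = 1-|x|$ on $B_1$ one has $\chi \equiv 1$, and Chebyshev becomes an identity.

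The main subtlety I expect is the translation of the single-directional starshapedness of $g$ at the origin into the monotonicity of $a(t)/(1-t)$; once that is in hand the remainder is essentially mechanical, continuity of $h$ taking care of the mild technical point concerning $a(\cdot)$ at flat pieces of $g$.
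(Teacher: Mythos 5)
Your argument is correct, and it is worth noting that the paper itself does not prove this statement at all: Theorem~\ref{theo.borell2} is recalled verbatim from Borell's paper (Theorem~6.2 of \cite{bor73}) and used as a black box to derive Lemma~\ref{borell2}. So you have supplied a self-contained proof where the authors supply only a citation. Your route --- slicing along rays from the origin, rescaling each ray to $[0,1]$, passing to the distribution function $a(t)$ via the layer-cake formula, extracting from the star-shapedness \eqref{concave} the two monotonicity facts ($g$ non-increasing and $a(t)/(1-t)$ non-decreasing), and closing with Chebyshev's integral inequality for the measure $d\mu=t^{s-1}(1-t)^N\,dt$ --- is elementary and checks out in every step: $h(0)=1$ follows from \eqref{concave} at $x=0$ plus continuity; the radial decomposition of $E$ is legitimate because a bounded open convex set containing the origin is exactly $\{\rho\omega:\ 0\le\rho<R(\omega)\}$; the identity $\int_0^1(1-u)^ru^{N-1}\,du=\tfrac{r}{N}\int_0^1 t^{r-1}(1-t)^N\,dt$ (symmetry of the Beta function) makes your normalization of $C_{r,s}$ consistent with the one in the statement; and all integrals entering the Chebyshev double integral are finite since $a\le1$ and $s\ge1$. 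The one technical point you flag is real but harmless: from $g(\lambda a(t))\ge\lambda t+1-\lambda$ you only get $\lambda a(t)\le\tilde a(\sigma)$ with $\tilde a(t):=\sup\{u:g(u)\ge t\}$, which differs from $a$ only at the (at most countably many) levels where $g$ has a flat piece; working with $\tilde a$ throughout, the monotonicity of $\tilde a(t)/(1-t)$ holds for every pair $t\le\sigma$ and $\tilde a=a$ almost everywhere, so nothing is lost in the integrals. Compared with Borell's original treatment, which develops a general theory of integral inequalities for generalized concave functions, your proof buys a short, fully elementary derivation of exactly the case needed here (and it also makes transparent why the cone $h(x)=1-|x|$ on the ball is extremal: it is the unique profile with $\chi$ constant, for which Chebyshev degenerates to an identity).
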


As an application we obtain the following lemma, which generalizes Proposition 5.2 in \cite{bbp20}.

\begin{lemm}\label{borell2}
Let $E\subset\R^N$ be a bounded open convex set and let $1<r<\infty$. Then for every concave function $h:E\to\R^+$ with $\|h\|_{L^\infty(E)}=1$ we have
\be\label{thingendim}
\frac{\begin{aligned}\int_E h^r(x)\,dx\end{aligned}}{\begin{aligned}\int_E h(x)\,dx\end{aligned}}\ge(N+1)\binom{N+r}{N}^{-1}\,.
\ee
In addition, the inequality above becomes an equality when $E$ is a ball of radius $1$ and $h(x)=1-|x|$.
\end{lemm}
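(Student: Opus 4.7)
The plan is to reduce the statement to Theorem \ref{theo.borell2} with $s=1$, modulo a translation placing a maximum point of $h$ at the origin.

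The key observation is that the hypothesis $h(\la x) \ge \la h(x) + (1-\la)$ of Theorem \ref{theo.borell2}, together with $\|h\|_\infty = 1$, is equivalent to $h$ being concave with $h(0) = 1$ and $h \le 1$. Indeed, taking $x = 0$ in \eqref{concave} forces $h(0) \ge 1$; and conversely, any concave $h$ with $h(0) = 1$ satisfies, along the segment from $0$ to $x$,
$$h(\la x) = h\bigl(\la x + (1-\la)\cdot 0\bigr) \ge \la h(x) + (1-\la) h(0) = \la h(x) + (1-\la).$$
Since both sides of \eqref{thingendim} are invariant under translations of $E$, it therefore suffices to translate $E$ so that $h$ attains its supremum at the origin. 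If this supremum is attained at some interior point, the translation is immediate; otherwise one picks $x_\eps \in E$ with $h(x_\eps) \ge 1-\eps$, translates $x_\eps$ to the origin, applies the theorem in the translated configuration (where \eqref{concave} now holds with $(1-\la)(1-\eps)$ in place of $(1-\la)$), and passes to the limit $\eps \to 0$.

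Once this normalization is in place, Theorem \ref{theo.borell2} applied with $s = 1$ yields
$$\int_E h^r\, dx \ge C_{r,1} \int_E h\, dx, \qquad C_{r,1} = \frac{\int_0^1 (1-t)^{N-1} t^r\, dt}{\int_0^1 (1-t)^{N-1} t\, dt}.$$
Both integrals are standard Beta functions, and the identity $B(a,b) = \Gamma(a)\Gamma(b)/\Gamma(a+b)$ gives
$$C_{r,1} = \frac{B(r+1,N)}{B(2,N)} = \frac{\Gamma(r+1)\Gamma(N+2)}{\Gamma(N+r+1)} = (N+1)\binom{N+r}{N}^{-1},$$
which is exactly the constant in \eqref{thingendim}. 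The equality case (unit ball, $h(x) = 1-|x|$) is inherited directly from the equality statement in Theorem \ref{theo.borell2}.

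The main obstacle, modest though it is, is the normalization step: the gap between plain concavity and the slightly stronger condition \eqref{concave} requires selecting the origin as a maximum point of $h$, which may not be attained inside the open set $E$. Once this approximation is dispatched, the lemma reduces to a mechanical application of Theorem \ref{theo.borell2} together with the Beta-function identification of the constant.
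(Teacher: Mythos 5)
Your proof is correct in substance but follows a genuinely different route from the paper's. The paper first proves the inequality for radially symmetric decreasing concave functions on a ball centered at the origin (where \eqref{concave} is automatic) and then handles a general $h$ via its Schwarz rearrangement $h^*$, using that rearrangement preserves both integrals and that $h^*$ is again concave; you instead translate $E$ so that a (near-)maximum point of $h$ sits at the origin and feed $h$ directly into Theorem \ref{theo.borell2}. Your route avoids the rearrangement machinery entirely --- in particular the non-obvious fact, which the paper cites as well known, that the decreasing rearrangement of a concave function is concave --- at the price of an approximation argument when the supremum is not attained on the open set $E$. The Beta-function identification of the constant and the treatment of the equality case are the same in both arguments. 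One side remark: the hypothesis \eqref{concave} is not literally \emph{equivalent} to concavity with $h(0)=1$ (it is a weaker, ray-wise condition), but you only use the implication you actually prove, namely that concavity plus $h(0)=1$ implies \eqref{concave}, so this is harmless.

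The one step that needs a touch more care is the limit $\eps\to0$. After translating $x_\eps$ to the origin you only have $h(\la x)\ge\la h(x)+(1-\la)(1-\eps)$, which is not the hypothesis of Theorem \ref{theo.borell2} as stated, so you cannot "apply the theorem in the translated configuration" verbatim. A clean fix is to apply the theorem to $g:=\min\{h/(1-\eps),\,1\}$, which is concave, satisfies $g(0)=1=\|g\|_\infty$, and obeys $h\le g\le h/(1-\eps)$; this gives
$$\int_E h^r\,dx\ge(1-\eps)^r\int_E g^r\,dx\ge(1-\eps)^r\,C_{r,1}\int_E g\,dx\ge(1-\eps)^r\,C_{r,1}\int_E h\,dx,$$
and letting $\eps\to0$ completes the argument. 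With this line added your proof is complete.
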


\begin{proof}
First we assume that $E\subset\R^N$ is a ball centered in the origin and $h$ is a radially symmetric, decreasing, concave function $h:E\to[0,1]$ with $h(0)=1$. Then $h$ satisfies \eqref{concave} and we can apply Theorem \ref{theo.borell2} with $s=1$, to get
$$\int_E h^{r}(x)\,dx\ge C_{r,1}\int_E h(x)\,dx,$$
where
$$C_{r,1}=\frac{\int_0^1 (1-t)^{N-1}t^{r}dt }{\int_0^1 (1-t)^{N-1}t\,dt}=\frac{\binom{N+r}{N}^{-1}}{\binom{N+1}{N}^{-1}}=(N+1)\binom{N+r}{N}^{-1}.$$
In order to get the inequality \eqref{thingendim} in the general case, let $h^*:B\to[0,1]$ be the radially symmetric decreasing rearrangement of $h$, defined on the ball $B$ centered at the origin and with the same volume as $E$. The standard properties of the rearrangement imply that
$$\int_B (h^*)^{r}(x)\,dx=\int_E h^{r}(x)\,dx\,,\qquad\int_B h^*(x)\,dx=\int_E h(x)\,dx\,.$$
Moreover, it is well-known that $h^*$ is concave. Since $h^*$ satisfies all the assumptions of the previous case, we get that $h^*$ (hence $h$) satisfies ~\eqref{thingendim}. Finally, it is easy to show that the inequality in~\eqref{thingendim} holds as an equality for every cone function $h(x)=1-|x|$.
\end{proof}

We are now in a position to show the main theorem of this section.

\begin{theo}\label{theo.thinlow}
Let $1<p<\infty$. Then 
$$\begin{cases}
\sup\{F_{p,1}(A,h)\ :\ \HH^{d-1}(A)<+\infty,\ h\ge0\}=\gamma_p\\
\inf\{F_{p,1}(A,h)\ :\ A\hbox{ convex bounded, $h\ge0$, $h$ concave}\}= \gamma_pd^{p-1}\binom{d+p'}{d-1}^{1-p}.
\end{cases}$$
In addition, the first equality is attained taking $h(x)$ to be any constant function while the second equality is attained taking as $A$ the unit ball and as the local thickness function $h(x)$ the function $1-|x|$.
\end{theo}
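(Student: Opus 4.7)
The plan is to prove both bounds by direct manipulation of the explicit formula \eqref{defFthin}, using only homogeneity in $h$ and Lemma \ref{borell2}; no further PDE input is needed once $F_{p,1}(A,h)$ has been expressed in the form \eqref{defFthin}.

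For the supremum I would start from the pointwise bound $h^{p'+1}=h^{p'}\cdot h\le \|h\|_\infty^{p'}\,h$ valid on $A$. Integrating over $A$ yields
\[
\int_A h^{p'+1}\,dx\le \|h\|_\infty^{p'}\int_A h\,dx,
\]
so the ratio inside the brackets in \eqref{defFthin} is at most $1$ and therefore $F_{p,1}(A,h)\le\gamma_p$ for every admissible pair $(A,h)$. Taking $h$ equal to a positive constant turns both sides into $c^{p'+1}|A|$, so the ratio is exactly $1$ and $F_{p,1}(A,h)=\gamma_p$. This identifies the first supremum and shows it is attained.

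For the infimum I would first observe that $F_{p,1}(A,h)$ is invariant under the scaling $h\mapsto ch$ with $c>0$, since both integrals in \eqref{defFthin} scale by $c^{p'+1}$. Hence we may assume $\|h\|_\infty=1$, and the inequality to prove reduces to
\[
\frac{\int_A h^{p'+1}\,dx}{\int_A h\,dx}\ge d\binom{d+p'}{d-1}^{-1}.
\]
This is exactly Lemma \ref{borell2} applied on the bounded convex set $A\subset\R^{d-1}$ (so $N=d-1$) with the concave function $h$ and exponent $r=p'+1$: the constant produced is $(N+1)\binom{N+r}{N}^{-1}=d\binom{d+p'}{d-1}^{-1}$. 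Raising to the power $p-1$ and multiplying by $\gamma_p$ gives the claimed lower bound, and the equality case of Lemma \ref{borell2} ($A$ the unit ball in $\R^{d-1}$ and $h(x)=1-|x|$) turns this inequality into an equality, so the infimum is attained and equals $\gamma_p d^{p-1}\binom{d+p'}{d-1}^{1-p}$.

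There is essentially no obstacle in the argument, because Lemma \ref{borell2} already encapsulates the analytic difficulty; the only point requiring care is the bookkeeping matching the exponent $r=p'+1$ and the dimension $N=d-1$ (since $A$ lives in $\R^{d-1}$, not $\R^d$) with the target constant $d\binom{d+p'}{d-1}^{1-p}$, and the verification that the scaling $h\mapsto ch$ preserves $F_{p,1}(A,h)$ so that the normalization $\|h\|_\infty=1$ needed in Lemma \ref{borell2} is harmless.
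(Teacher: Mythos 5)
Your argument is correct and is essentially the paper's own proof: the paper declares the upper bound $F_{p,1}(A,h)\le\gamma_p$ and the equality for constant $h$ to be straightforward consequences of \eqref{defFthin} (you supply the obvious pointwise bound $h^{p'+1}\le\|h\|_\infty^{p'}h$), and it obtains the infimum exactly as you do, by applying Lemma \ref{borell2} with $N=d-1$, $E=A$, $r=p'+1$ after the harmless normalization $\|h\|_\infty=1$. The only nitpick is a phrasing slip: under $h\mapsto ch$ the integral $\int_A h\,dx$ scales by $c$, not $c^{p'+1}$; it is the full denominator $\|h\|_\infty^{p'}\int_A h\,dx$ that scales by $c^{p'+1}$, which is what your invariance claim actually needs.
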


\begin{proof}
Using definition \eqref{defFthin} it is straightforward to prove that
$$F_{p,1}(A,h)\le\gamma_p$$
and to verify that, if $h$ is constant, then
$$F_{p,1}(A,h)=\gamma_p.$$
Finally, by applying Lemma \ref{borell2} with $N=d-1$, $E=A$ and $r=p'+1$ we obtain the second part of the theorem.
\end{proof}

\section{The case $p=1$}\label{sp1}

Given an open set $\O\subset\R^d$ with finite measure we define its Cheeger constant $h(\O)$ as
\be\label{def.cheeger}
h(\O)=\inf\bigg\{\frac{P(E)}{|E|}\ :\ |E|>0,\ E\Subset\O\bigg\}.
\ee
where $E\Subset\O$ means that $\bar{E}\subset\O$. Notice that in definition \eqref{def.cheeger}, thanks to a well-known approximation argument, we can evaluate the quotient $P(E)/|E|$ among smooth sets which are compactly contained in $\O$. Following \cite{KaFr03} we have
\be\label{limp1}
\lim_{p\to1}\lambda_p(\O)=h(\O),
\ee
for every open set $\O$ with finite measure. 

\begin{rema}
A {\it caveat} is necessary at this point: the usual definition of Cheeger constant as
$$c(\O)=\inf\bigg\{\frac{P(E)}{|E|}\ :\ |E|>0,\ E\subset\O\bigg\}$$
is not appropriate to provide the limit equality \eqref{limp1}, which would hold only assuming a mild regularity on $\O$ (for instance, it is enough to consider $\O$ which coincides with its essential interior, see \cite{leo15}). To prove that in general $h(\O)\neq c(\O)$, one can consider $\O=B_1\setminus\partial B_{1/2}$. Then $c(\O)=c(B_1)=d$, while $h(\O)=2d$. The latter follows from the fact that, if $E\subset\O$, then $E=E_1\cup E_2$ where $E_1\Subset B_1\setminus \bar B_{1/2}$ and $E_2\Subset B_{1/2}$, together with the equality
$$
h(B_1\setminus \bar B_{1/2})=h(B_{1/2})=2d.$$
\end{rema}

By the same argument used in \cite{KaFr03} to prove \eqref{limp1} we can show that $T_p(\O)\to h^{-1}(\O)$ as $p\to1$. For the sake of completeness we give the short proof below.

\begin{prop}
Let $\O\subset\R^d$ be an open set with finite measure. Then, as $p\to 1$,
\be\label{eq.limp1} T_p(\O)\to h^{-1}(\O)\;.
\ee
\end{prop}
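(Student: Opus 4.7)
The plan is to mirror the Kawohl--Fridman argument for $\lambda_p \to h$ (cf.\ \cite{KaFr03}) by establishing two matching one-sided bounds: $\limsup_{p\to 1^+} T_p(\O) \le 1/h(\O)$ and $\liminf_{p\to 1^+} T_p(\O) \ge 1/h(\O)$. The upper bound will use the $L^1$-Cheeger inequality as a Poincar\'e-type estimate plugged into the variational definition \eqref{def.tor}; the lower bound will come from testing $T_p$ against Lipschitz cutoffs built around near-optimal sets in \eqref{def.cheeger}.

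For the upper bound, I would first derive $h(\O)\,\|u\|_{L^1}\le\|\nabla u\|_{L^1}$ for every $u\in C^\infty_c(\O)$. By the coarea formula $\int_\O|\nabla u|\,dx=\int_0^\infty P(\{|u|>t\})\,dt$, and for a.e.\ $t$ (by Sard's theorem) the superlevel set $\{|u|>t\}$ is smooth and compactly contained in $\O$, so by \eqref{def.cheeger} it satisfies $P(\{|u|>t\})\ge h(\O)\,|\{|u|>t\}|$; integrating in $t$ yields the inequality. Combining with H\"older $\|\nabla u\|_{L^1}\le|\O|^{1/p'}\|\nabla u\|_{L^p}$ and inserting into \eqref{def.tor} gives $T_p(\O)\le|\O|^{p-1}/h(\O)^p$, and letting $p\to 1^+$ produces $\limsup_{p\to 1^+} T_p(\O)\le 1/h(\O)$.

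For the lower bound, I would exploit the fact that the infimum defining $h(\O)$ can be computed over smooth sets $E\Subset\O$. For such an $E$ and $\delta>0$ so small that the tubular neighborhood $E_\delta:=\{x:\dist(x,E)\le\delta\}$ still satisfies $E_\delta\Subset\O$, the Lipschitz function $\psi_\delta(x):=\max\bigl(0,\,1-\dist(x,E)/\delta\bigr)$ belongs to $W^{1,\infty}_0(\O)$. A direct computation gives $\int_\O\psi_\delta\,dx\ge|E|$ and $\int_\O|\nabla\psi_\delta|^p\,dx=\delta^{-p}\,|E_\delta\setminus E|$, so plugging $\psi_\delta$ into \eqref{def.tor} yields $T_p(\O)\ge|E|^p\,\delta^p\,/\,|E_\delta\setminus E|$. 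Letting first $p\to 1^+$ with $\delta$ fixed (so $\delta^{p-1}\to 1$), then $\delta\to 0^+$ (so $|E_\delta\setminus E|/\delta\to P(E)$), gives $\liminf_{p\to 1^+} T_p(\O)\ge|E|/P(E)$, and taking the supremum over admissible smooth $E\Subset\O$ closes the matching inequality.

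The only delicate point is the iterated passage to the limits $p\to 1^+$ and $\delta\to 0^+$ in the lower bound. Taking them separately, in the order indicated above, avoids any coupling issue: the Minkowski-content equality $|E_\delta\setminus E|/\delta\to P(E)$ is classical for smooth $E$ via the tubular neighborhood theorem, and the remaining manipulations are routine. Hence the argument is expected to be short and essentially parallel to the one for \eqref{limp1}.
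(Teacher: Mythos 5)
Your proposal is correct and follows essentially the same route as the paper: the upper bound via the coarea/Cavalieri Cheeger--Poincar\'e inequality $h(\O)\|u\|_{1}\le\|\nabla u\|_{1}$ plus H\"older, giving $T_p(\O)\le|\O|^{p-1}h(\O)^{-p}$, and the lower bound by testing \eqref{def.tor} with a Lipschitz cutoff equal to $1$ on a near-optimal smooth $E\Subset\O$ with gradient bounded by $1/\delta$ on the outer $\delta$-collar, taking $p\to1$ before $\delta\to0$ and invoking the Minkowski-content limit. The only cosmetic difference is that you write the cutoff explicitly as $\max(0,1-\dist(x,E)/\delta)$ where the paper just postulates its existence.
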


\begin{proof}
First we notice that for any $u\in C^{\infty}_c(\O)$, it holds:
\be\label{preCT}
\frac{\int_\O|\nabla u(x)|dx}{\int_\O |u(x)| dx}\ge h(\O).
\ee
Indeed, by assuming without loss of generality that $u\ge0$,  by coarea formula and Cavalieri's principle, we have that
$$\int_\O|\nabla u|\,dx=\int_0^{+\infty}\!\!\!\HH^{d-1}(\{u=t\})\,dt,\qquad\int_\O u\,dx=\int_0^{+\infty}\!\!\!|\{u>t\}|\,dt.$$
Since the sets $\{u>t\}\Subset\O$ are smooth for a.e. $t\in u(\O)$, \eqref{preCT} follows straightforwardly from \eqref{def.cheeger}. 
By combining \eqref{def.tor} with \eqref{preCT} and H\"older inequality we then have
\be\label{chtp}
|\O|^{1-p}T_p(\O)\le h^{-p}(\O),
\ee
for any $1<p<\infty$.

Now, let $E_k\Subset \O$ be a sequence of smooth sets of $\O$ such that $P(E_k)/|E_k|\to h(\O)$. For a fixed $k$ and any $\eps>0$ small enough, we can find a Lipschitz function $v$ compactly supported in $\O$, such that,
$$\chi_{E_k}\le v\le\chi_{E_{k,\eps}},\qquad|\nabla v|\le1/\eps\hbox{ in }E_{k,\eps}\setminus E_k,$$
where $E_{k,\eps}=E_k+B_\eps$. Hence, by \eqref{def.tor}, we have
$$T_p(\O)\ge\frac{\eps^{p}|E_k|^{p}}{|E_{k,\eps}\setminus E_k|}.$$
By first passing to the limit as $p\to 1$, and then as $\eps\to 0$ we get
$$\liminf_{p\to 1}T_p(\O)\ge \frac{|E_k|}{P(E_k)},$$
which implies, as $k\to\infty$, 
$$\liminf_{p\to 1}T_p(\O)\ge h^{-1}(\O).$$
Finally we conclude, taking into account \eqref{chtp}.
\end{proof}

The limits \eqref{limp1} and \eqref{eq.limp1} justify the following definition:
$$F_{1,q}(\O):=\left(h(\O)|\O|^{1/d}\right)^{1-q}.$$
Notice that $F_{p,q}(\O)\to F_{1,q}(\O)$ as $p\to 1$.
In the next proposition we solve the optimization problems for $F_{1,q}$ in both of the classes of general and convex domains.

\begin{prop}
For $0<q<1$, we have
$$\begin{cases}
\sup\big\{F_{1,q}(\O)\ :\ \O\subset\R^d\text{ open and convex },\ 0<|\O|<\infty\big\}=+\infty;\\
\min\big\{F_{1,q}(\O)\ :\ \O\subset\R^d\text{ open},\ 0<|\O|<\infty\big\}=F_{1,q}(B)
\end{cases}$$
For $q>1$, we have
$$\begin{cases}
\inf\big\{F_{1,q}(\O)\ :\ \O\subset\R^d\text{ open and convex},\ 0<|\O|<\infty\Big\}=0;\\
\max\big\{F_{1,q}(\O)\ :\ \O\subset\R^d\text{ open },\ 0<|\O|<\infty\big\}=F_{1,q}(B).
\end{cases}$$
\end{prop}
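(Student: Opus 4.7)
The functional decomposes as $F_{1,q}(\O) = \Psi(\O)^{1-q}$ with the scale-invariant quantity
\[
\Psi(\O) := h(\O)|\O|^{1/d}.
\]
Since the map $t\mapsto t^{1-q}$ is strictly increasing for $0<q<1$ and strictly decreasing for $q>1$, all four extremal problems reduce to locating $\inf\Psi$ and $\sup\Psi$ in the relevant class. My plan therefore has two independent ingredients: a Faber--Krahn-type lower bound $\Psi(\O)\ge\Psi(B)$ for every open $\O$ (which supplies the minima/maxima attained at balls in the general class) and an explicit construction of convex sets along which $\Psi$ diverges (which supplies $+\infty$ and $0$ in the convex class).

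For the Faber--Krahn bound, I would test any smooth competitor $E\Subset\O$ against the classical Euclidean isoperimetric inequality $P(E)\ge d\omega_d^{1/d}|E|^{(d-1)/d}$ to get
\[
\frac{P(E)}{|E|}\ge\frac{d\omega_d^{1/d}}{|E|^{1/d}}\ge\frac{d\omega_d^{1/d}}{|\O|^{1/d}}.
\]
Taking the infimum in the definition \eqref{def.cheeger} yields $h(\O)|\O|^{1/d}\ge d\omega_d^{1/d}$. Since competitors are required to be compactly contained, sharpness for a ball $B_r$ has to be obtained by approximation: the smooth sets $B_{r(1-\delta)}\Subset B_r$ give $P/|E|=d/(r(1-\delta))\to d/r$ as $\delta\to0$, so $h(B_r)=d/r$ and $\Psi(B_r)=d\omega_d^{1/d}$. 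Thus balls realise the lower bound, which via the monotonicity of $t\mapsto t^{1-q}$ gives that balls minimise $F_{1,q}$ when $0<q<1$ and maximise $F_{1,q}$ when $q>1$ among all open sets of finite measure.

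For the two unbounded-behaviour statements in the convex class, I would plug into $\Psi$ the thin convex slabs $C_\eps := A\times(-\eps,\eps)$ for a fixed bounded convex set $A\subset\R^{d-1}$. A Fubini argument along vertical lines $L_x:=\{x\}\times\R$ yields the lower bound $h(C_\eps)\ge 1/\eps$: any smooth $E\Subset C_\eps$ has vertical section $E\cap L_x$ relatively compact in $(-\eps,\eps)$, so $\HH^0(\partial E\cap L_x)\ge 2$ whenever the section is nonempty; integrating over $x\in\R^{d-1}$ gives $P(E)\ge 2|\pi(E)|$, where $\pi$ is the projection onto $\R^{d-1}$, while $|E|\le 2\eps|\pi(E)|$. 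Combined with $|C_\eps|=2\eps\HH^{d-1}(A)$ this gives
\[
\Psi(C_\eps)\ge (2\HH^{d-1}(A))^{1/d}\,\eps^{1/d-1}\longrightarrow+\infty\quad\text{as }\eps\to0^+,
\]
(using $d\ge 2$). The monotonicity of $t\mapsto t^{1-q}$ therefore produces $F_{1,q}(C_\eps)\to+\infty$ when $0<q<1$ and $F_{1,q}(C_\eps)\to 0$ when $q>1$, proving the remaining two statements.

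The only mildly delicate point is the sharpness of the Faber--Krahn inequality for the ball: because the definition of $h$ restricts competitors to compactly contained sets, one has to approximate $B$ from inside rather than simply testing $E=B$; everything else is straightforward once the reduction to the scalar quantity $\Psi$ is in place.
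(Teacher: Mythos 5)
Your proof is correct and follows the same skeleton as the paper's: reduce all four problems to the scale-invariant quantity $\Psi(\Omega)=h(\Omega)|\Omega|^{1/d}$ via the monotonicity of $t\mapsto t^{1-q}$, obtain ball-extremality in the general class from the Cheeger version of the Faber--Krahn inequality, and make $\Psi$ blow up along thin convex slabs. The difference is that you prove both ingredients where the paper cites them. For the ball you derive $h(\Omega)|\Omega|^{1/d}\ge d\omega_d^{1/d}=h(B)|B|^{1/d}$ directly from the Euclidean isoperimetric inequality applied to each competitor $E\Subset\Omega$, while the paper simply invokes this inequality as well known; your treatment of the attainment at $B$, approximating from inside because competitors in \eqref{def.cheeger} must be compactly contained, is the right way to handle that subtlety. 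For the slabs you obtain $h(C_{A,\eps})\ge 1/\eps$ by a Fubini slicing argument ($P(E)\ge 2\HH^{d-1}(\pi(E))$ and $|E|\le 2\eps\,\HH^{d-1}(\pi(E))$ for smooth $E\Subset C_{A,\eps}$), whereas the paper uses the convex-set inequality $h(\Omega)\ge P(\Omega)/(d|\Omega|)$ from Brasco, which yields the weaker but equally sufficient bound $h(C_{A,\eps})\ge 1/(d\eps)$. Your route is more elementary and self-contained, and the slicing bound does not even use convexity of $A$, only the thinness in the vertical direction; the paper's citations make the written proof shorter at the price of importing external results. Both arguments correctly conclude via $\Psi(C_{A,\eps})\ge c\,\eps^{1/d-1}\to+\infty$ for $d\ge2$.
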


\begin{proof}
The minimality (respectively maximality) of $B$, for $0<q<1$ (respectively for $q>1$), is an immediate consequence of the well known inequality
$$h(B)|B|^{1/d}\le h(\O)|\O|^{1/d}.$$
which holds for any $\O\subset\R^d$ with finite measure.
To prove the other cases we use the inequality
$$h(\O)\ge \frac{P(\O)}{d |\O|},$$
which holds for any $\O\subset\R^d$ open, bounded, convex set (see \cite{bra18}, Corollary 5.2). Then taking $C_{A,\eps}$ as in \eqref{slab} we get
$$\lim_{\eps\to 0}h(C_{A,\eps})|C_{A,\eps}|^{1/d}=+\infty,$$
from which the thesis easily follows.
\end{proof}

\section{The case $p=\infty$}\label{spinfty}

The limit behaviour of the quantities $\la_p(\O)$, $T_p(\O)$, as $p\to \infty$, are well known for bounded open sets $\O\subset\R^d$: in \cite{FIN} and in \cite{JLM}  the authors prove that
\be\label{lapinfty}
(\lambda_p(\O))^{1/p}\to\frac{1}{\rho(\O)},
\ee
while,  following \cite{BBM} (see also  \cite{Ka90}) it holds $w_p\to d_{\O}$ uniformly in $\O$, which  implies
\be\label{payne}
(T_p(\O))^{1/p}\to\int_\O d_\O(x)\,dx.
\ee
Actually, in all these results, the boundedness assumption on $\O$ is not needed, as it is only used to provide the compactness of the embedding $W_0^{1,p}(\O)$ into the space $C_0(\O)$ defined as the completion of $C_c(\O)$ with respect to the uniform convergence. Indeed, this holds under the weaker assumption that $|\O|<+\infty$ (see Appendix \ref{sapp} for more details and for a $\Gamma$-convergence point of view of both limits \eqref{lapinfty} and \eqref{payne}).

According to \eqref{lapinfty} and to \eqref{payne}  we define the shape functional $F_{\infty,q}$ as 
\be\label{finftyq}
F_{\infty,q}(\O)=\frac{\big(\intbar_\O d_{\O}(x)\,dx\big)^q}{\rho(\O)|\O|^{(q-1)/d}}.
\ee

\begin{prop}\label{prop.Finf1}
 Let $\O\subset\R^d$ be an open convex set. Then
\be\label{d-bounds}
\frac{1}{d+1}\le\frac{1}{\rho(\O)}\intbar_\O d_\O(x)\,dx\le\frac12\;.
\ee
Moreover, both inequalities are sharp. In particular
$$\begin{cases}
\sup\big\{F_{\infty,1}(\O)\ :\ \O\text{ open and convex in }\R^d,\ 0<|\O|<\infty\big\}=1/2;\\
\min\big\{F_{\infty,1}(\O)\ :\ \O\text{ open and convex in }\R^d,\ 0<|\O|<\infty\Big\}=F_{\infty,1}(B)=1/(d+1).
\end{cases}$$
\end{prop}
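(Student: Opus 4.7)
Both bounds in \eqref{d-bounds} rest on the layer-cake identity $\int_\O d_\O(x)\,dx=\int_0^\rho|\O_t|\,dt$, where $\O_t:=\{x\in\O:d_\O(x)\ge t\}$ is the inner parallel body and $\rho:=\rho(\O)$; throughout, fix an incenter $x_0$ so that $B_\rho(x_0)\subset\O$. For the \emph{lower} bound, consider the homothety $\phi_t(x):=x_0+(1-t/\rho)(x-x_0)$: for any $z$ with $|z|<t$ the identity
$$\phi_t(x)+z=(1-t/\rho)\,x+(t/\rho)\big(x_0+(\rho/t)z\big)$$
exhibits $\phi_t(x)+z$ as a convex combination of $x\in\O$ and $x_0+(\rho/t)z\in B_\rho(x_0)\subset\O$, proving $B_t(\phi_t(x))\subset\O$, i.e.\ $\phi_t(\O)\subset\O_t$. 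Hence $|\O_t|\ge(1-t/\rho)^d|\O|$, and integration yields $\int_\O d_\O\,dx\ge|\O|\rho/(d+1)$, with equality attained by any ball (where $d_\O(x)=\rho-|x-x_0|$).

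For the \emph{upper} bound, the crucial ingredient is the elementary convex-geometric inequality
$$|K|\le\rho(K)\,P(K)\qquad\text{for every convex body }K\subset\R^d,$$
which follows from the coarea formula together with $|\nabla d_K|=1$ a.e.: $|K|=\int_K|\nabla d_K|\,dx=\int_0^{\rho(K)}P(\{d_K\ge s\})\,ds\le\rho(K)P(K)$, where the last step uses monotonicity of the perimeter under inclusion of convex sets. Apply this with $K=\O_t$: since $B_{\rho-t}(x_0)\subset\O_t$ and a larger inscribed ball would contradict the definition of $\rho$, one has $\rho(\O_t)=\rho-t$. Combined with the standard identity $\frac{d}{dt}|\O_t|=-P(\O_t)$ (from the coarea formula applied to $d_\O$), this yields the differential inequality $(\rho-t)\frac{d}{dt}|\O_t|+|\O_t|\le0$, i.e.\ $t\mapsto|\O_t|/(\rho-t)$ is nonincreasing on $[0,\rho)$. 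Comparing with the value at $t=0$ gives $|\O_t|\le|\O|(1-t/\rho)$, and integration in $t\in[0,\rho]$ produces $\int_\O d_\O\,dx\le|\O|\rho/2$.

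The optimization conclusions then follow: the ball minimizes $F_{\infty,1}$ because $F_{\infty,1}(B)=1/(d+1)$, while the supremum $1/2$ is reached in the limit along the slab family $C_{A,\eps}=A\times(-\eps,\eps)$ from \eqref{slab} (with $A\subset\R^{d-1}$ bounded convex and $\eps\to0$), for which $\rho(C_{A,\eps})=\eps$, $|C_{A,\eps}|=2\eps\HH^{d-1}(A)$, and a direct computation gives $\int_{C_{A,\eps}}d_{C_{A,\eps}}\,dx=\eps^2\HH^{d-1}(A)+o(\eps^2)$, so $F_{\infty,1}(C_{A,\eps})\to1/2$. The main obstacle is the upper bound: Brunn--Minkowski only yields the \emph{lower} estimate $|\O_t|\ge|\O|(1-t/\rho)^d$, and upgrading this to the matching \emph{linear} upper bound $|\O_t|\le|\O|(1-t/\rho)$ --- which is precisely what the Gronwall-style integration above achieves --- is the key technical input.
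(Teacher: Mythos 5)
Your proof is correct, but both halves of \eqref{d-bounds} are obtained by routes genuinely different from the paper's. For the lower bound, the paper applies Borell's integral inequality (Theorem \ref{theo.borell}) to the concave function $d_\O$ and passes to the limit $p\to\infty$ in $\big[\intbar_\O d_\O^p\big]^{1/p}\le C_{p,1}\intbar_\O d_\O$; your homothety argument $\phi_t(\O)\subset\O_t$, giving $|\O_t|\ge(1-t/\rho)^d|\O|$ and then $\int_\O d_\O\,dx\ge|\O|\rho/(d+1)$ by the layer-cake formula, is more elementary and self-contained, at the price of not yielding the whole family of $L^p$-estimates \eqref{p-norma} that Borell's theorem provides. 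For the upper bound, the paper observes that $A(t)=|\O_t|$ has derivative $-L(t)$ with $L$ nonincreasing (perimeters of nested convex sets), hence $A$ is convex and lies below the chord joining $(0,|\O|)$ to $(\rho,0)$, which is exactly your linear bound $|\O_t|\le|\O|(1-t/\rho)$; you reach the same bound via the auxiliary inequality $|K|\le\rho(K)P(K)$ applied to $K=\O_t$ together with $\rho(\O_t)=\rho-t$ and a Gronwall-type integration of $(\rho-t)A'(t)+A(t)\le0$. Both derivations ultimately rest on the same fact (monotonicity of perimeter under inclusion of convex sets), but the paper's convexity-of-$A$ observation is shorter, while your differential-inequality route makes explicit why the naive Brunn--Minkowski estimate $|\O_t|\ge|\O|(1-t/\rho)^d$ must be upgraded to a linear upper bound. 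The sharpness discussion (ball for the minimum, thin slabs $C_{A,\eps}$ for the supremum) matches the paper's.
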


For its proof, we recall the following result, for which we refer to \cite{bor73} and \cite{gar98}.

\begin{theo}\label{theo.borell}
Let $1\le q\le p$. Then for every convex set $E$ of $\R^N$ $(N\ge1)$ and every nonnegative concave function $f$ on $E$ we have
$$\Big[\intbar_E f^p\,dx\Big]^{1/p}\le C_{p,q}\Big[\intbar_E f^q\,dx\Big]^{1/q},$$
where the constant $C_{p,q}$ is given by
$$C_{p,q}=\binom{N+q}{N}^{1/q}\binom{N+p}{N}^{-1/p}.$$
In addition, the inequality above becomes an equality when $E$ is a ball of radius $1$ and $f(x)=1-|x|$.
\end{theo}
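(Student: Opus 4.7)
The plan is to reduce Borell's reverse H\"older inequality to a one-dimensional moment monotonicity (the 1-D Berwald inequality) via Brunn--Minkowski and a radial rearrangement.

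First I would symmetrize: since $L^r$-norms depend only on the distribution of $f$, I replace $(E,f)$ by $(B^\ast,f^\ast)$ with $B^\ast$ a ball of the same volume and $f^\ast$ the radial nonincreasing rearrangement. Because $f$ is concave the super-level sets $\{f>t\}$ are convex, so Brunn--Minkowski gives that $\phi(t):=|\{f>t\}|^{1/N}$ is concave on $[0,\Vert f\Vert_\infty]$. Consequently $f^\ast(x)=\phi^{-1}(\omega_N^{1/N}|x|)$ is the composition of the concave decreasing $\phi^{-1}$ with a convex norm, hence $f^\ast$ is itself concave on $B^\ast$. Normalizing $\Vert f\Vert_\infty=1$ and $|E|=\omega_N$, I write $f^\ast(x)=G(|x|)$ with $G\colon[0,1]\to[0,1]$ concave nonincreasing, and polar integration yields
$$\intbar_{B_1}\!f^r\,dx \;=\; N\int_0^1 v^{N-1}G(v)^r\,dv.$$
The inequality then amounts to showing that
$$M(r):=\Big(N\binom{N+r}{N}\int_0^1 v^{N-1}G(v)^r\,dv\Big)^{\!1/r}$$
is nonincreasing in $r>0$; the extremal case $G(v)=1-v$ (corresponding to $f(x)=1-|x|$ on $B_1$) makes $M(r)\equiv 1$, so this monotonicity will yield both the inequality and the sharpness of $C_{p,q}$.

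The central analytic observation is that $\eta(v):=G(v)/(1-v)$ is nondecreasing on $[0,1)$. Indeed, for $v_1<v_2<1$, concavity of $G$ gives
$$G(v_1)\;\le\; G(v_2)\,\tfrac{1-v_1}{1-v_2}\,+\,G(1)\,\tfrac{v_1-v_2}{1-v_2}\;\le\; G(v_2)\,\tfrac{1-v_1}{1-v_2},$$
since the last summand is nonpositive. Using the Beta identity $N\binom{N+r}{N}=1/B(N,r+1)$, one may recast
$$M(r)^r\;=\;\int_0^1\eta(v)^r\,d\tilde\nu_r(v),\qquad d\tilde\nu_r(v):=\tfrac{v^{N-1}(1-v)^r}{B(N,r+1)}\,dv,$$
so that $M(r)$ is the $L^r(\tilde\nu_r)$-norm of $\eta\ge 1$ against the Beta$(N,r+1)$ probability measure.

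The final step is the monotonicity $M(r_1)\ge M(r_2)$ for $r_1\le r_2$, and this is the main obstacle. The plan is to differentiate $\log M(r)=r^{-1}\log\mathbb{E}_{\tilde\nu_r}[\eta^r]$ in $r$ and reduce $(\log M)'\le 0$ to a correlation inequality between the nondecreasing function $\eta$ and the decreasing function $\log(1-v)$ under the tilted measure $\eta^r d\tilde\nu_r/\mathbb{E}_{\tilde\nu_r}[\eta^r]$. The two competing effects---Lyapunov growth of $r\mapsto\mathbb{E}_\mu[\eta^r]^{1/r}$ for fixed $\mu$ on one hand, and the stochastic drift of $\tilde\nu_r$ toward $v=0$ (where $\eta(0)=G(0)=1$ is smallest) as $r$ grows on the other---must be reconciled, exploiting the log-concavity of the Beta family. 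This is the classical 1-D Berwald content, and is the technical heart of the argument. For the equality case, the choice $E=B_1$, $f(x)=1-|x|$ gives $G(v)=1-v$ and $\eta\equiv 1$, whence $M(r)\equiv 1$ and all inequalities become equalities, confirming both the sharpness of $C_{p,q}$ and the extremal configuration claimed in the statement.
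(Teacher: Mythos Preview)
The paper does not prove this theorem at all: it is quoted verbatim from Borell \cite{bor73} (see also \cite{gar98}) and used as a black box in the proof of Proposition~\ref{prop.Finf1}. So there is no ``paper's own proof'' to compare against.

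That said, your outline is the standard route toward Borell's inequality and the reductions you perform are sound: the equimeasurable radial rearrangement preserves all $L^r$-norms, Brunn--Minkowski gives concavity of $t\mapsto|\{f>t\}|^{1/N}$ and hence of the radial profile $G$, and your observation that $\eta(v)=G(v)/(1-v)$ is nondecreasing (with $\eta(0)=1$) is correct. The recasting of the averaged $r$-th moment as $\mathbb{E}_{\tilde\nu_r}[\eta^r]$ against the $\mathrm{Beta}(N,r+1)$ law is also fine.

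The genuine gap is precisely where you flag it: the monotonicity of $r\mapsto M(r)$. You describe a plan (differentiate $\log M$, reduce to a correlation inequality, invoke log-concavity of the Beta family), but you do not carry it out; you yourself write that the two competing effects ``must be reconciled'' and then defer to ``the classical 1-D Berwald content''. This is the entire substance of the theorem---everything before it is reversible bookkeeping---so as written the proposal is a reduction to an unproved statement of the same strength. If you want a self-contained argument, one clean way is Borell's original: for each fixed $s\ge 1$ compare $\int_0^1 v^{N-1}G(v)^s\,dv$ with the same integral for the cone $G_0(v)=c(1-v)$ having equal $L^1$-moment, using that $G-G_0$ changes sign at most once (a consequence of concavity) together with the monotonicity of $(1-v)^{s-1}$; this yields the reverse H\"older inequality directly without the differentiation-in-$r$ detour.
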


\begin{proof}[Proof of Proposition \ref{prop.Finf1}]
In order to prove the right-hand side inequality in \eqref{d-bounds}, for every $t\ge 0$, we denote by $\O(t)$ the \textit{interior parallel set} at distance $t$ from $\partial\O$, i.e.
$$\O(t):=\big\{x\in\O\ :\ d(x,\partial\O)>t\big\},$$
and by $A(t):=|\O(t)|$. Moreover we set
$$L(t):=P(\{x\in\O\ :\ d(x,\partial\O)=t\}).$$
Then for a.e. $t\in(0,\rho(\O))$ there exists the derivative $A'(t)$ and it coincides with $-L(t)$. Moreover, being $\O$ a convex set, $L$ is a monotone decreasing function. Then $A$ is a convex function such that $A(\rho)=0$ and $A(0)=|\O|$. As a consequence we have
$$A(t)\le A(0)\Big(1-\frac{t}{\rho}\Big)\qquad\text{on }[0,\rho].$$
Integrating by parts, we get
\[\begin{split}
\int_\O d_\O(x)\,dx&=\int_0^{\rho(\O)}tL(t)\,dt=-\int_0^{\rho(\O)}tA'(t)\,dt=\int_0^{\rho(\O)}A(t)\,dt\\
&\le\int_0^{\rho(\O)}A(0)\Big(1-\frac{t}{\rho}\Big)\,dt=\frac12\rho(\O)|\O|.
\end{split}\]
The value $1/2$  is asymptotically attained in \eqref{d-bounds} by considering a sequence of slab domains
$$\O_\eps:=(0,1)^{d-1}\times (0,\eps)\subset\R^d,$$
as $\eps\to0$. Indeed, we have $\rho(\O_{\eps})=\eps/2$ and $|\O_{\eps}|=\eps$. Being
$$A_{\eps}(t)=|(t,1-t)^{d-1}\times (t,\eps-t)|= (1-2t)^{d-1}(\eps-2t)$$
we get
$$\lim_{\eps\to0}\frac{\intbar_{\O_\eps}d_{\O_\eps}(x)\,dx}{\rho(\O_\eps)}=\lim_{\eps\to0}\frac{\int_0^{\eps/2}A_\eps(t)\,dt}{\eps^2/2}=\lim_{\eps\to0}\bigg[\frac{(1-\eps)^{d+1}+\eps (d+1)-1}{(d+1)d\eps^2}\bigg]=1/2.$$

Now we prove the left-hand side inequality in \eqref{d-bounds}. Since $\O$ is convex, the distance function $d_\O$ is concave (see \cite{AK}); then, applying Theorem \eqref{theo.borell} to $d_\O$, we obtain
\be\label{p-norma}
\Big[\intbar_\O d_\O^p\,dx\Big]^{1/p}\le C_{p,1}\intbar_\O d_\O\,dx\quad\forall p\ge1.
\ee
Since \eqref{p-norma} is an identity when $\O=B$, $C_{p,1}$ satisfies
$$C_{p,1}=\frac{\|f\|_p}{\|f\|_1}\omega_d^{1-1/p}\to\frac{\omega_d}{\|f\|_1}=(d+1).$$
As $p\to\infty$ in \eqref{p-norma}, we obtain
$$\rho(\O)\le(d+1)\intbar_\O d_\O\,dx$$
which is an equality when $\O=B$.
\end{proof}

\begin{rema}
The proof of the right-hand side of \eqref{d-bounds} relies on the convexity properties of the function $A(t)$. In the planar case a general result, due to Sz. Nagy (see \cite{Nagy59}), ensures that, if $\O$ is any bounded $k$\textit{-connected} open set, (i.e. $\O^c$ has $k$ bounded connected components), then the function
$$t\mapsto A(t)+2\pi(k-1)t^2,\qquad t\in(0,\rho(\O))$$ 
is convex. Therefore, for such an $\O$, with the same argument as above it is easy to prove that
$$F_{\infty,1}(\O)\le 1/2+(k-1)\pi/6.$$
Hence, it is interesting to notice how, even when $k=0,1$, the upper bound given in \eqref{d-bounds} remains sharp. In other words, in the maximization of $F_{\infty,1}$ on planar domains, there is no gain in replacing the class of convex domains by the larger one consisting of simply-connected domains or even more in allowing $\O$ to have a single hole.
\end{rema}

In the general case $q\ne1$ the optimization problems for the functional $F_{\infty,q}$ defined in \eqref{finftyq} are studied below.

\begin{coro}\label{prop.Finfq}
If $0<q<1$, then 
$$\begin{cases}
\sup\big\{F_{\infty,q}(\O)\ :\ \O\text{ open and convex in }\R^d,\ 0<|\O|<\infty\big\}=\infty;\\
\min\big\{F_{\infty,q}(\O)\ :\ \O\text{ open and convex in }\R^d,\ 0<|\O|<\infty\Big\}=F_{\infty,q}(B)=(d+1)^{-q}\omega_d^{(1-q)/d}.
\end{cases}$$
If $q>1$, then 
$$\begin{cases}
\sup\big\{F_{\infty,q}(\O)\ :\ \O\text{ open and convex in }\R^d,\ 0<|\O|<\infty\big\}\le(1/2)^q\omega_d ^{(1-q)/d} ;\\
\inf\big\{F_{\infty,q}(\O)\ :\ \O\text{ open and convex in }\R^d,\ 0<|\O|<\infty\Big\}=0\;.
\end{cases}$$
\end{coro}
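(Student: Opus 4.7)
The plan is to reduce the four statements to the already-proved bounds of Proposition \ref{prop.Finf1} by extracting a factor of $F_{\infty,1}$ and isolating a purely geometric inradius/volume ratio. Writing directly from the definition,
$$F_{\infty,q}(\O)=\Big(\frac{1}{\rho(\O)}\intbar_\O d_\O\,dx\Big)^q\cdot\Big(\frac{\rho(\O)}{|\O|^{1/d}}\Big)^{q-1}=F_{\infty,1}(\O)^q\cdot\Big(\frac{\rho(\O)}{|\O|^{1/d}}\Big)^{q-1}.$$
The first factor is controlled in both directions by Proposition \ref{prop.Finf1}, namely $1/(d+1)\le F_{\infty,1}(\O)\le 1/2$ for convex $\O$. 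For the second factor I would use the elementary observation that any convex $\O$ contains a ball of radius $\rho(\O)$, so
$$\omega_d\,\rho(\O)^d\le|\O|,\qquad\text{i.e.}\qquad\frac{\rho(\O)}{|\O|^{1/d}}\le\omega_d^{-1/d},$$
with equality exactly when $\O$ is a ball; no positive lower bound is available on this ratio in the convex class, since the slab sequence $\O_\eps=(0,1)^{d-1}\times(0,\eps)$ used already in Proposition \ref{prop.Finf1} yields $\rho(\O_\eps)/|\O_\eps|^{1/d}=\eps^{1-1/d}/2\to 0$ as $\eps\to 0$.

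For $0<q<1$ the exponent $q-1$ is negative, so both inequalities flip for the second factor: using the convex isoperimetric bound gives
$$\Big(\frac{\rho(\O)}{|\O|^{1/d}}\Big)^{q-1}\ge\omega_d^{(1-q)/d},$$
and combining with $F_{\infty,1}(\O)^q\ge(d+1)^{-q}$ yields
$$F_{\infty,q}(\O)\ge(d+1)^{-q}\omega_d^{(1-q)/d}.$$
A direct computation on $\O=B$, where $F_{\infty,1}(B)=1/(d+1)$ and $\rho(B)/|B|^{1/d}=\omega_d^{-1/d}$, shows equality, proving the minimum assertion. For the supremum, I evaluate on the same slabs $\O_\eps$: the bounded factor $F_{\infty,1}(\O_\eps)$ stays $\le 1/2$ (in fact tends to $1/2$), while $(\rho(\O_\eps)/|\O_\eps|^{1/d})^{q-1}\to+\infty$ because $q-1<0$, so $F_{\infty,q}(\O_\eps)\to+\infty$.

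For $q>1$ the exponent $q-1$ is positive, so the two bounds $F_{\infty,1}(\O)\le 1/2$ and $\rho(\O)/|\O|^{1/d}\le\omega_d^{-1/d}$ combine multiplicatively to
$$F_{\infty,q}(\O)\le(1/2)^q\,\omega_d^{(1-q)/d},$$
which is the stated upper bound. For the infimum, the slab sequence again does the job: $F_{\infty,1}(\O_\eps)^q$ remains bounded (tends to $(1/2)^q$) while $(\rho(\O_\eps)/|\O_\eps|^{1/d})^{q-1}\to 0$, so $F_{\infty,q}(\O_\eps)\to 0$.

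The whole argument is essentially bookkeeping once the factorization is made; the only mild care needed is to keep track of the sign of $q-1$ when inverting the monotone inequalities, and to note that the slab sequence is convex so it is admissible in all four statements. No genuinely new estimate beyond Proposition \ref{prop.Finf1} and the trivial inclusion of an inball is required.
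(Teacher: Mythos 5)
Your argument is correct and is essentially identical to the paper's proof: the same factorization $F_{\infty,q}=F_{\infty,1}^{q}\cdot(\rho(\O)/|\O|^{1/d})^{q-1}$ (equation \eqref{eq.decoFinfq}), the same use of the bounds \eqref{d-bounds} together with $|\O|\ge\omega_d\rho(\O)^d$, and the same slab sequence for the degenerate cases. The only difference is cosmetic, namely that you make the sign of $q-1$ explicit when flipping the inequalities.
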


\begin{proof}
Notice that 
\be\label{eq.decoFinfq} 
F_{\infty, q}(\O)=F_{\infty,1}^{q}(\O)\left(\frac{\rho(\O)}{|\O|^{1/d}}\right)^{q-1},
\ee
and that the inequality $|\O|\ge\omega_d\rho(\O)^d$ holds for every open set $\O\subset\R^d$ with equality when $\O=B$. Thus, if $0<q<1$, by \eqref{d-bounds} we have
$$F_{\infty,q}(\O)\ge F_{\infty,q}(B)=(d+1)^{-q}\omega_d^{(1-q)/d},$$
while if $q>1$, using again \eqref{d-bounds} we have
$$F_{\infty,q}(\O)\le(1/2)^q\omega_d^{(1-q)/d}.$$
Finally, let $\O_\eps$ be the slab domain as in Proposition \ref{prop.Finf1}. Then 
$$\lim_{\eps\to 0}F_{\infty,q}(\O_\eps)=(1/2)^{-q}\lim_{\eps\to 0}\left(\frac{\eps}{2\eps^{1/d}}\right)^{q-1}=
\begin{cases}
0,&\hbox{if }q>1;\\
\infty,&\hbox{if }0<q<1,
\end{cases}$$
from which the thesis is achieved.
\end{proof}

If we remove the convexity assumption on the admissible domains $\O$ it is easy to show that the minimization problem for $F_{\infty,q}$ is always ill posed. Indeed, if $q>1$ this follows directly by Corollary \eqref{prop.Finfq} while, if $q<1$, taking into account \eqref{eq.decoFinfq} and by consider $\O_n$ to be the union of $n$ disjoint balls of fixed radii, we get $F_{\infty,q}(\O_n)\to0$ as $n\to\infty$. Similarly, if $q=1$, taking $\O_n$ to be the union of $n$ disjoint balls of radius $r_j=j^{-\alpha}$, where $1/(d+1)<\alpha<1/d$, we have $F_{\infty,1}(\O_n)\to0$, as $n\to\infty$. Concerning the upper bound, we trivially have
$$F_{\infty,1}(\O)\le1,$$ 
and, as a consequence, (using again \eqref{eq.decoFinfq}), when $q\ge 1$, we have
$$F_{\infty,q}(\O)\le\omega_d^{(1-q)/d}.$$
However, working with general domains provides an upper bound larger than in \eqref{d-bounds}; for instance, in the two-dimensional case, taking as $\O_N$ the unit disk where we remove $N$ points as in Figure \ref{fig1}, gives
$$\lim_{N\to\infty}F_{\infty,1}(\O_N)=\intbar_E|x|\,dx=\frac13+\frac{\log3}{4}\approx0.608$$
where $E$ is the regular exagon with unitary sides centered at the origin, as an easy calculation shows.

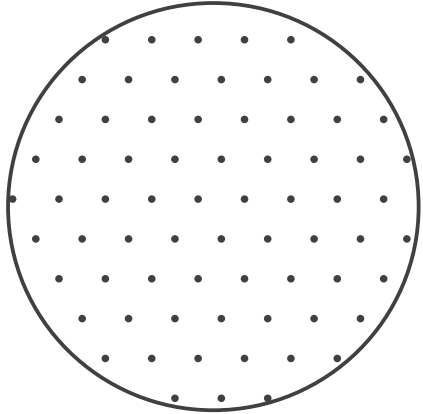
\begin{figure}[h!]
\centering
\begin{tikzpicture}
\coordinate (C) at (0.2, -0.1); 
\begin{scope}
\clip (C) circle (\raggiocerchio);
\foreach \x in {-\MM, ..., \MM} {
\foreach \y in {-\MM, ..., \MM} {
\fill[black!75, x=(0:\distanzapunti), y=(60:\distanzapunti)] (\x, \y) circle (0.5mm);}}
\end{scope}
\draw[black!75, line width=0.5mm] (C) circle (\raggiocerchio);
\end{tikzpicture}

\caption{The two-dimensional region $\O_N$.}\label{fig1}
\end{figure}

\section{Further remarks and open questions}\label{further}

Several interesting problems and questions about the shape functionals $F_{p,q}$ are still open; in this section we list some of them.

\bigskip

{\bf Problem 1. }The characterization of the infimum of $F_{p,q}$ in the class of all domains is well clarified in Proposition \ref{prop.infall}; on the contrary, for the supremum of $F_{p,q}$, Proposition \ref{prop.supall} only says it is finite for $q\ge1$. It would be interesting to know if the supremum can be better characterized, if it is attained, and in particular if it is attained for a ball when the exponent $q$ is large enough (see also Problems 1 and 2 in \cite{bbp20}).

\bigskip

{\bf Problem 2. }Concerning Problem 1 above, the case $q=1$ is particularly interesting. The Polya inequality gives $\sup F_{p,1}\le1$, and Proposition \ref{sharppolya} gives $\sup F_{p,1}=1$ whenever $p\le d$. It would be interesting to prove (or disprove) that $\sup F_{p,1}<1$ for all $p>d$.

\bigskip

{\bf Problem 3. }In the convex setting, Proposition \ref{prop.p1convex} provides some upper and lower bounds to $F_{p,1}$ that however are far from being sharp. Even in the case $p=2$, sharp values for the infimum and the supremum of $F_{2,1}$ in the class of convex sets are unknown (see Conjecture 4.2. in \cite{bbp20}). It seems natural to conjecture that the right sharp inequalities are those given in Theorem \ref{theo.thinlow} for $F_{p,1}$ on the class of thin domain.

\bigskip

{\bf Problem 4. }In the two-dimensional case with $p=\infty$ we have seen that the domains $\O_N$ in Figure \ref{fig1} give for the shape functional $F_{\infty,1}$ the asymptotic value $\frac13+\frac{\log3}{4}$. It would be interesting to prove (or disprove) that this number is actually the supremum of $F_{\infty,1}(\O)$ when $\O$ varies in the class of all bounded open two-dimensional sets. In addition, in the case of a dimension $d>2$, it is not clear how a maximizing sequence $(\O_n)$ for $F_{\infty,1}$ has to be.

\appendix\section{}\label{sapp}
We devote this Appendix to give a proof of  the known asymptotics  \eqref{lapinfty} and \eqref{payne}  by means of $\Gamma$-convergence  when $0<|\O|<+\infty$. 
We  recall that if $p>d$ and $\O\subset\R^d$ is any (possibly unbounded) open set with finite measure we have the compact embedding:
\be\label{compact}
W^{1,p}_0(\O)\hookrightarrow C_0(\O).
\ee
A quick proof of it can be obtained by combining the Gagliardo-Niremberg inequality in $W^{1,p}(\R^d)$:
\be\label{GN}
\Vert\phi\Vert_{L^{\infty}(\R^d)}\le C(d,p)\Vert\nabla\phi\Vert^{1-\alpha}_{L^{p}(\R^d)}\Vert\phi\Vert^{\alpha}_{L^{p}(\R^d)}\,\qquad\forall\phi\in W^{1,p}(\R^d),
\ee
together with the well known facts that the inclusion $W^{1,p}_0(\O)\hookrightarrow L^p(\O)$ is compact (thanks to the Riesz-Frech\'et-Kolmogorov Theorem) and the embedding \eqref{compact} is continuous. Note that, in \eqref{GN}, $C(d,p)$ denotes a  positive constant depending on $p$ and $d$, and $\alpha=1-d/p$ (we refer to \cite{Ad}, Chapter $6$, for a comprehensive discussion on necessary and sufficient conditions for the compactness of several embeddings  of Sobolev spaces).

In particular, if we denote by $W_0^{1,\infty}(\O)$ the closure of $C_c^\infty(\O)$ with respect to the weak* convergence of $W^{1,\infty}(\O)$, we have that  $u\in W^{1,\infty}_0(\O)$  if and only if $u\in C_0(\O)$ and $u$ is a Lipschitz continuous function on $\O$. Moreover $W_0^{1,\infty}(\O)$ can be easily characterized as:
\be\label{cap}
W^{1,\infty}_0(\O)=W^{1,\infty}(\O)\cap\bigcap_{p\ge 1}W^{1,p}_0(\O).
\ee

\begin{prop}\label{Gamma1}
Let $\O\subset\R^d$ be an open set with finite measure and let $\Psi_p,\Psi_\infty: L^{1}(\O)\to\bar\R$ be defined by
\[\begin{split}
&\Psi_p(u):=\begin{cases}
\|\nabla u\|_{L^{p}(\O)}&\hbox{if }u\in W_0^{1,p}(\O)\ \text{and }\|u\|_p=1,\\
+\infty&\hbox{otherwise},
\end{cases}\\
&\Psi_\infty(u):=\begin{cases}
\|\nabla u\|_{L^{\infty}(\O)}&\hbox{if }u\in W^{1,\infty}_0(\O),\ \| u\|_\infty=1,\\
+\infty&\hbox{otherwise}.
\end{cases}
\end{split}\]
Then, as $p\to\infty$, the sequence $\Psi_p$ $\Gamma$-converges to $\Psi_\infty$ with respect to the $L^1$-convergence.
\end{prop}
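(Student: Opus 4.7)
The plan is to verify directly the two halves of the $\Gamma$-convergence definition: a recovery sequence for the upper bound and a liminf inequality for the lower bound. Both will hinge on a systematic use of H\"older's inequality to compare $L^p$ and $L^\infty$ norms on the finite-measure set $\O$, together with the compact embedding \eqref{compact} that becomes available once $p>d$.

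For the upper bound, given $u\in W^{1,\infty}_0(\O)$ with $\|u\|_\infty=1$, I would take the natural candidate $u_p:=u/\|u\|_p$. By the characterization \eqref{cap}, $u\in W^{1,p}_0(\O)$ for every $p$, hence $u_p\in W^{1,p}_0(\O)$ and $\|u_p\|_p=1$. Since $|\O|<\infty$ and $u\in L^\infty(\O)$, the standard convergence $\|u\|_p\to\|u\|_\infty=1$ gives $u_p\to u$ in $L^1(\O)$; and $\|\nabla u_p\|_p=\|\nabla u\|_p/\|u\|_p\to\|\nabla u\|_\infty=\Psi_\infty(u)$, so $\limsup_p\Psi_p(u_p)\le\Psi_\infty(u)$.

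For the liminf inequality, fix $u_p\to u$ in $L^1(\O)$ with $\liminf_p\Psi_p(u_p)=L<+\infty$, and extract a subsequence for which the liminf becomes a limit, with $u_p\in W^{1,p}_0(\O)$ and $\|u_p\|_p=1$. For every fixed $q\ge 1$ and $p\ge q$, H\"older yields
\[
\|\nabla u_p\|_q\le|\O|^{1/q-1/p}\|\nabla u_p\|_p,\qquad \|u_p\|_q\le|\O|^{1/q-1/p}.
\]
Choosing $q>d$, the sequence is bounded in $W^{1,q}_0(\O)$, so by \eqref{compact} a further subsequence converges in $C_0(\O)$ and weakly in $W^{1,q}_0(\O)$ to a limit which, by uniqueness of the $L^1$ limit, must coincide with $u$. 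Passing to the weak liminf in the gradient bound and letting $q\to\infty$ in the renormalized bounds $\|\nabla u\|_q/|\O|^{1/q}\le L$ and $\|u\|_q/|\O|^{1/q}\le 1$, one gets $\|\nabla u\|_\infty\le L$ and $\|u\|_\infty\le 1$.

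The main obstacle, and the step I expect to require the most care, is recovering the saturation $\|u\|_\infty=1$ in the limit, since the $L^p$-normalization does not transfer cleanly to $L^\infty$. Here I would exploit $1=\int_\O|u_p|^p\le\|u_p\|_\infty^p|\O|$ to obtain $\|u_p\|_\infty\ge|\O|^{-1/p}$, and then use the $C_0(\O)$ convergence already secured to push this inequality to the limit, giving $\|u\|_\infty\ge 1$ and hence $\|u\|_\infty=1$. Combining $u\in W^{1,q}_0(\O)$ for every $q$ with $u\in L^\infty(\O)$ and $\|\nabla u\|_\infty\le L<+\infty$, the characterization \eqref{cap} places $u$ in $W^{1,\infty}_0(\O)$, so $\Psi_\infty(u)=\|\nabla u\|_\infty\le L=\liminf_p\Psi_p(u_p)$, which completes the proof.
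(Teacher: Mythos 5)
Your proposal is correct and follows essentially the same route as the paper's proof: the same recovery sequence $u/\|u\|_p$ for the limsup inequality, and for the liminf inequality the same H\"older estimates $\|\nabla u_p\|_q\le|\O|^{1/q-1/p}\|\nabla u_p\|_p$, the compact embedding into $C_0(\O)$ for $q>d$, the characterization \eqref{cap} to place the limit in $W^{1,\infty}_0(\O)$, and the bound $1=\|u_p\|_{p}\le|\O|^{1/p}\|u_p\|_\infty$ combined with uniform convergence to recover the saturation $\|u\|_\infty=1$. No gaps.
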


\begin{proof}
Let $p_n\to\infty$. The $\Gamma$-$\limsup$ inequality is trivial since, for every $u\in W^{1,\infty}_0(\O)$ with $\|u\|_\infty=1$, the sequence $u_{p_n}=\|u\|_{p_n}^{-1}u$ converges  to $u$ in $L^1$ and satisfies
$$\|u_n\|_{p_n}=1,\quad\limsup_{n\to\infty}\Psi_{p_n} (u_n)= \Psi_\infty(u).$$
To prove the $\Gamma$-$\liminf$ inequality, without loss of generality, let $u\in L^{\infty}(\O)$,  $(u_{p_n})\subseteq  W^{1,p_n}_0(\O)$ be such that $u_{p_n}\to u$ in $L^1(\O)$, $\|u_{p_n}\|_{p_n}=1$,  and  $\liminf_{n\to\infty}\Psi_{p_n}(u_{p_n})=C<\infty$.  Since for every $q\ge1$ and for $n$ large enough it holds
$$\|Du_{p_n}\|_{q}\leq |\O|^{ 1/q-1/{p_n}}\Psi_{p_n}(u_{p_n})$$
we get that $u_{p_n}\to u$ in $L^q(\O)$, $u\in W^{1,q}_0(\O)$ and
\be\label{normeq}
\|Du\|_{q}\leq C |\O|^{ 1/q}.
\ee
Moreover  
$$|\O|^{1/q}=\lim_{n\to\infty}|\O|^{1/q-1/p_n}\|u_{p_n}\|_p\ge \lim_{n\to \infty}\|u_{p_n}\|_q=\|u\|_q,$$
which yields, as $q\to\infty$,  $\|u\|_{\infty}\le 1$. Combining this estimate with \eqref{normeq}, we get that  $u\in W^{1,\infty}(\O)$; hence,  by \eqref{cap}, $u\in W^{1,\infty}_0(\O)$. Thanks to the compact embedding of  $W_0^{1,q}(\O)$ in $C_0(\O)$ when $q>d$, we obtain that $\|u_{p_n}- u\|_{\infty}\to 0$ as $n\to \infty$ and, since
$$1=\|u_n\|_{p_n}\le |\O|^{1/p_n}\|u_{p_n}\|_\infty\,,$$
we get that $\|u\|_\infty=1$. Finally, by letting $n\to \infty$ in \eqref{normeq}, it follows
$$\Psi_\infty (u)=\|Du\|_\infty\leq C=\liminf_{n\to\infty}\Psi(u_{p_n})  .$$
\end{proof}

\begin{coro}\label{coro.la}
Let $\O\subset\R^d$ be an open set with finite measure. Then, as $p\to \infty$,
$$\la^{1/p}_p(\O)\to \rho(\O)^{-1}.$$
\end{coro}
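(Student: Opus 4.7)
The plan is to interpret $\lambda_p^{1/p}(\O)$ as the minimum value of the functional $\Psi_p$ introduced in Proposition \ref{Gamma1}, and then to exploit the $\Gamma$-convergence $\Psi_p \to \Psi_\infty$ together with an equi-coercivity statement to conclude that $\min \Psi_p \to \min \Psi_\infty$. Finally, one must identify this limit minimum with $\rho(\O)^{-1}$.

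\textbf{Step 1 (reformulation).} By definition \eqref{def.lambda} and homogeneity,
\[
\lambda_p^{1/p}(\O)=\min\bigl\{\|\nabla u\|_{L^p(\O)}\ :\ u\in W_0^{1,p}(\O),\ \|u\|_{L^p(\O)}=1\bigr\}=\min_{u\in L^1(\O)}\Psi_p(u).
\]
So the claim amounts to showing that $\min \Psi_{p_n}\to \min \Psi_\infty$ for every $p_n\to\infty$, and that $\min \Psi_\infty=\rho(\O)^{-1}$.

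\textbf{Step 2 (equi-coercivity).} To pass from $\Gamma$-convergence to convergence of minima, I would prove: if $p_n\to\infty$ and $u_n\in L^1(\O)$ satisfies $\sup_n \Psi_{p_n}(u_n)<\infty$, then $(u_n)$ is relatively compact in $L^1(\O)$. This is essentially the argument already used in the $\Gamma$-$\liminf$ part of Proposition \ref{Gamma1}: from $\|u_n\|_{p_n}=1$ and H\"older one gets, for any fixed $q>d$ and $n$ large enough, a uniform bound on $\|u_n\|_{W^{1,q}(\O)}$; by the compact embedding \eqref{compact} applied at exponent $q$ (and finiteness of $|\O|$) the sequence is relatively compact in $C_0(\O)$, hence in $L^1(\O)$.

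\textbf{Step 3 (identification of $\min\Psi_\infty$).} Standard $\Gamma$-convergence theory then yields $\lim_n \min \Psi_{p_n}=\min \Psi_\infty$, so it remains to compute
\[
\min \Psi_\infty=\min\bigl\{\|\nabla u\|_{L^\infty(\O)}\ :\ u\in W_0^{1,\infty}(\O),\ \|u\|_{L^\infty(\O)}=1\bigr\}.
\]
For any such $u$, the Lipschitz estimate $|u(x)|\le \|\nabla u\|_\infty\, d_\O(x)\le \|\nabla u\|_\infty\,\rho(\O)$ for $x\in\O$ (obtained by connecting $x$ to $\partial\O$ along a nearly optimal segment and using that $u$ vanishes on $\partial\O$ in the sense of $W^{1,\infty}_0$) gives $\|\nabla u\|_\infty \ge \rho(\O)^{-1}$. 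The reverse inequality is realized by the test function $u=d_\O/\rho(\O)$, which belongs to $W^{1,\infty}_0(\O)$ (it is $1$-Lipschitz, vanishes on $\partial\O$, and has finite support up to scaling with $|\O|<\infty$), satisfies $\|u\|_\infty=1$, and $\|\nabla u\|_\infty=\rho(\O)^{-1}$. Hence $\min\Psi_\infty=\rho(\O)^{-1}$, proving the corollary.

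The only mildly delicate point is Step 2: one must justify that the uniform $L^q$-gradient bound is available for every fixed $q>d$ as soon as $p_n$ is large enough, and that convergence in $C_0(\O)$ implies $L^1$-convergence despite $\O$ being possibly unbounded (here one uses $|\O|<\infty$ together with the fact that functions in $C_0(\O)$ are bounded and vanish at ``infinity'' in $\O$). Once this is in place, Steps 1 and 3 are essentially direct consequences of Proposition \ref{Gamma1} and the definition of $\rho(\O)$.
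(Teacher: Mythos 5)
Your argument is correct and follows essentially the same route as the paper: both rest on Proposition \ref{Gamma1}, the pointwise bound $|u(x)|\le d_\O(x)\|\nabla u\|_{L^\infty(\O)}$ for $u\in W^{1,\infty}_0(\O)$, and the use of $d_\O$ (normalized) as the optimal competitor. The only difference is one of packaging: you invoke the fundamental theorem of $\Gamma$-convergence (equi-coercivity plus $\Gamma$-convergence implies convergence of minima) and then compute $\min\Psi_\infty=\rho(\O)^{-1}$, whereas the paper obtains the upper bound by testing $d_\O$ directly in the Rayleigh quotient at finite $p$ and the lower bound by passing to the limit along the minimizers of $\Psi_p$ via the $\Gamma$-$\liminf$ inequality; these amount to the same computations.
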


\begin{proof}
Using $d_{\O}$ as a test function for \eqref{def.lambda} we have $\limsup_{p\to\infty}\la_p(\O)\le \rho(\O)^{-1}$.
Moreover we notice that, for any $\phi\in W^{1,\infty}_0(\O)$, it holds
\be\label{ineq}
|\phi(x)|\le d_{\O}(x)\|\nabla \phi\|_{\infty}.
\ee
Let $u_p$ be the (only) minimum of $\Psi_p$, and let $p_n\to \infty$. With the same argument of Proposition \ref{Gamma1} we can  assume $u_{p_n}\to u_\infty$ uniformly in $L^{1}(\O)$, where $u_\infty\in W^{1,\infty}_0(\O)$ and $\|u_{\infty}\|_{\infty}=1$. Then, by Proposition \ref{Gamma1} and by \eqref{ineq},  we have
$$\rho(\O)^{-1}\le \|\nabla u_{\infty}\|=\Psi(u_\infty)\le \liminf_{n\to\infty}\Psi(u_{p_n})=\liminf_{p\to\infty}\la_{p_n}^{1/p_n}(\O).$$
The thesis follows  by the arbitrariness  of the sequence $p_n$.
\end{proof}

Next Proposition generalizes Proposition 2.1 in \cite{GNP}.

\begin{prop}\label{Gamma2} Let $\O\subset\R^d$ be an open set with finite measure and let $\Phi_p,\Phi_\infty:L^1(\O)\to\bar\R$ be defined by
\[\begin{split}
&\Phi_p(u):=\begin{cases}
\frac1p\int_\O|\nabla u|^p\,dx&\hbox{if }u\in W_0^{1,p}(\O),\\
+\infty&\hbox{otherwise},
\end{cases}\\
&\Phi_\infty(u):=\begin{cases}
0&\hbox{if }u\in W^{1,\infty}_0(\O),\ \|\nabla u\|_\infty\le1,\\
+\infty&\hbox{otherwise}.
\end{cases}
\end{split}\]
Then, as $p\to\infty$, the functionals $\Phi_p$ $\Gamma$-converge to $\Phi_\infty$ with respect to the $L^1$-convergence. Moreover, we have
$$\lim_{p\to\infty}\Phi_p(u)=\Phi_\infty(u)\qquad\hbox{for every }u\in L^1(\O).$$
\end{prop}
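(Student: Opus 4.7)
The plan is to verify separately the $\Gamma$-limsup inequality, the $\Gamma$-liminf inequality, and the pointwise convergence; all three are considerably simpler than the analogous parts of Proposition \ref{Gamma1}, thanks to the fact that $\Phi_\infty$ takes only the values $0$ and $+\infty$.

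For the $\Gamma$-limsup (and simultaneously the pointwise convergence whenever $\Phi_\infty(u)<+\infty$), let $u\in W^{1,\infty}_0(\Omega)$ with $\|\nabla u\|_\infty\le1$. I would take the constant recovery sequence $u_p=u$: by \eqref{cap}, $u\in W^{1,p}_0(\Omega)$ for every finite $p$, and
$$\Phi_p(u)=\frac1p\int_\O|\nabla u|^p\,dx\le\frac{|\O|}{p}\|\nabla u\|_\infty^p\le\frac{|\O|}{p}\longrightarrow0=\Phi_\infty(u).$$

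For the $\Gamma$-liminf, suppose $u_{p_n}\to u$ in $L^1(\O)$ with $\liminf_n\Phi_{p_n}(u_{p_n})=C<+\infty$. Up to a subsequence I may assume $\Phi_{p_n}(u_{p_n})\le C+1$, so that
$$\|\nabla u_{p_n}\|_{p_n}\le\bigl(p_n(C+1)\bigr)^{1/p_n}\longrightarrow 1.$$
For any fixed $q\ge1$ and $p_n>q$, H\"older's inequality on $\O$ yields
$$\|\nabla u_{p_n}\|_q\le|\O|^{1/q-1/p_n}\|\nabla u_{p_n}\|_{p_n},$$
hence $\limsup_n\|\nabla u_{p_n}\|_q\le|\O|^{1/q}$. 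Fixing $q>d$, the sequence is bounded in $W^{1,q}_0(\O)$; extracting a further subsequence, it converges weakly in $W^{1,q}_0(\O)$ and, by the compact embedding \eqref{compact}, uniformly to some element which the $L^1$-convergence identifies with $u$. Weak lower semicontinuity of the $L^q$-norm gives $\|\nabla u\|_q\le|\O|^{1/q}$ for every $q>d$; since $\lim_{q\to\infty}\|\nabla u\|_q=\|\nabla u\|_\infty$ on a set of finite measure, letting $q\to\infty$ produces $\|\nabla u\|_\infty\le1$. Combining with $u\in\bigcap_p W^{1,p}_0(\O)$ (which follows from $u\in W^{1,q}_0(\O)$ for all large $q$ together with $|\O|<\infty$) and the characterization \eqref{cap}, I conclude $u\in W^{1,\infty}_0(\O)$, whence $\Phi_\infty(u)=0\le C$.

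For the pointwise statement when $\Phi_\infty(u)=+\infty$, I would split into two subcases. If $u\notin W^{1,p_0}_0(\O)$ for some $p_0$, then the inclusion $W^{1,q}_0(\O)\subset W^{1,p_0}_0(\O)$ for $q\ge p_0$ (valid as $|\O|<\infty$) forces $\Phi_p(u)=+\infty$ for all $p\ge p_0$. Otherwise $u\in\bigcap_p W^{1,p}_0(\O)$ and either $u\notin W^{1,\infty}(\O)$ or $\|\nabla u\|_\infty>1$; in either situation \eqref{cap} forces $\|\nabla u\|_\infty>1$, so picking $1<M<\|\nabla u\|_\infty$ the set $A_M:=\{|\nabla u|>M\}$ has positive Lebesgue measure and
$$\Phi_p(u)\ge\frac{M^p\,|A_M|}{p}\longrightarrow+\infty.$$

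The technical core is the $\Gamma$-liminf step: from the mere $L^{p_n}$-control on $\nabla u_{p_n}$ one must extract uniform $L^q$-bounds that are asymptotically sharp for each finite $q$, identify the weak $W^{1,q}_0$-limit with the $L^1$-limit $u$ via the compactness in \eqref{compact} (for $q>d$), and finally let $q\to\infty$ to place $u$ in $W^{1,\infty}_0(\O)$ with $\|\nabla u\|_\infty\le1$.
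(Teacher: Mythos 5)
Your proof is correct and follows essentially the same route as the paper's: a constant recovery sequence for the $\Gamma$-limsup, and for the $\Gamma$-liminf the H\"older estimate $\|\nabla u_{p_n}\|_q\le|\O|^{1/q-1/p_n}\|\nabla u_{p_n}\|_{p_n}$ followed by weak lower semicontinuity, the limit $q\to\infty$, and the characterization \eqref{cap}. The only addition is your explicit treatment of the pointwise statement when $\Phi_\infty(u)=+\infty$, which the paper leaves implicit (it follows from the $\Gamma$-liminf applied to the constant sequence); your direct argument via the set $A_M$ is a fine, and slightly more self-contained, way to close that case.
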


\begin{proof}
Let $p_n\to\infty$. The $\Gamma$-limsup inequality is trivial since for any $u\in W^{1,\infty}_0(\O)$ with $\|\nabla u\|_\infty\le1$ we have $u\in W_0^{1,p_n}(\O)$ for every $n\in\N$ and thus
$$\limsup_{n\to\infty}\Phi_{p_n}(u)=\limsup_{n\to\infty}\int_\O\frac{1}{p_n}|\nabla u(x)|^{p_n}\,dx\le|\O|\limsup_{n\to\infty}\frac{1}{p_n}=0=\Phi_\infty(u).$$
To prove the $\Gamma$-liminf inequality, we can assume $u_n,u\in L^1(\O)$, $u_n\to u$ in $L^1(\O)$, $u_n\in W_0^{1,p_n}(\O)$, and
$$\liminf_{n\to\infty}\Phi_{p_n}(u_{p_n})=\lim_{n\to\infty}\Phi_{p_n}(u_{p_n})=M<+\infty.$$
If $q>1$ and $p_n>q$ we have
$$\|\nabla u_{p_n}\|_q\le|\O|^{1/q-1/{p_n}}\|\nabla u_{p_n}\|_{p_n}\le|\O|^{1/q-1/{p_n}}(M p_n)^{1/p_n}$$
which forces $u\in W^{1,q}_0(\O)$.
Moreover, since
$$\|\nabla u\|_q\le\liminf_{n\to\infty}\|\nabla u_{p_n}\|_q\le|\O|^{1/q},$$
we have also $\|\nabla u\|_\infty\le1$. Therefore, by \eqref{cap}, $u\in W_0^{1,\infty}(\O)$ and 
$$\Phi_\infty(u)=0\le\liminf_{n\to\infty}\Phi_{p_n}(u_{p_n}).$$
The thesis follows  by the arbitrariness  of the sequence $p_n$.
\end{proof}

\begin{coro}
Let $\O\subset\R^d$ be an open set with finite measure, let $w_p$ be the solution to \eqref{eq.pdetor}. Then, as $p\to\infty$, 
$$w_p\to d_\O\ \text{in } L^\infty(\O), \qquad  (T_p(\O))^{1/p}\to\int_\O d_\O(x)\,dx.$$ 
\end{coro}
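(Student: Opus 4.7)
\medskip

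\noindent\textbf{Proof plan.} The approach is to recognize $w_p$ as the minimizer of the energy $J_p$ in \eqref{def.Jp}, to apply the $\Gamma$-convergence of Proposition~\ref{Gamma2} in order to identify $d_\O$ as the limit minimizer, and finally to upgrade $L^1$-convergence of minimizers to uniform convergence by exploiting the compact embedding~\eqref{compact}.

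\emph{Step 1: variational setup and $\Gamma$-convergence of $J_p$.} The function $w_p$ is the unique minimizer over $L^1(\O)$ of
$$J_p(u):=\Phi_p(u)-\int_\O u\,dx.$$
The linear functional $L(u):=\int_\O u\,dx$ is continuous for the $L^1$-convergence, because $|\O|<\infty$. Hence the $\Gamma$-convergence $\Phi_p\xrightarrow{\Gamma}\Phi_\infty$ proved in Proposition~\ref{Gamma2} propagates to $J_p\xrightarrow{\Gamma}J_\infty:=\Phi_\infty-L$ in $L^1(\O)$. On the effective domain $\{\Phi_\infty<+\infty\}=\{u\in W^{1,\infty}_0(\O):\|\nabla u\|_\infty\le 1\}$, inequality \eqref{ineq} gives $|u(x)|\le d_\O(x)$, so $L(u)\le\int_\O d_\O\,dx$ with equality if and only if $u=d_\O$. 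Therefore $d_\O$ is the unique minimizer of $J_\infty$.

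\emph{Step 2: equi-coercivity.} Testing the PDE \eqref{eq.pdetor} against $w_p$ yields the energy identity $\int_\O|\nabla w_p|^p\,dx=\int_\O w_p\,dx$. Combining H\"older's inequality with the Rayleigh bound $\|w_p\|_p\le\lambda_p(\O)^{-1/p}\|\nabla w_p\|_p$ gives
$$\int_\O w_p\,dx\le|\O|^{1/p'}\|w_p\|_p\le|\O|^{1/p'}\lambda_p(\O)^{-1/p}\Big(\int_\O w_p\,dx\Big)^{1/p},$$
whence, after rearranging,
$$\int_\O w_p\,dx\le|\O|\,\lambda_p(\O)^{-1/(p-1)}.$$
Since Corollary~\ref{coro.la} gives $\lambda_p(\O)^{1/p}\to\rho(\O)^{-1}$, also $\lambda_p(\O)^{-1/(p-1)}\to\rho(\O)$, so the right-hand side stays bounded as $p\to\infty$. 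In particular $\|\nabla w_p\|_p^p=\int w_p$ is uniformly bounded, and for any fixed $q>d$ and $p>q$,
$$\|\nabla w_p\|_q\le|\O|^{1/q-1/p}\|\nabla w_p\|_p,$$
so the family $(w_p)$ is bounded in $W^{1,q}_0(\O)$ for every fixed $q>d$.

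\emph{Step 3: uniform convergence and conclusion.} By the compact embedding \eqref{compact} (with any fixed $q>d$), up to a subsequence $w_{p_n}\to u_\infty$ uniformly on $\O$ for some $u_\infty\in W^{1,\infty}_0(\O)$. The fundamental theorem of $\Gamma$-convergence forces $u_\infty$ to minimize $J_\infty$, so $u_\infty=d_\O$; uniqueness of the limit makes the full family $w_p$ converge to $d_\O$ in $L^\infty(\O)$. From \eqref{def.tor2},
$$T_p(\O)^{1/p}=\Big(\int_\O w_p\,dx\Big)^{(p-1)/p},$$
and since $\int_\O w_p\,dx\to\int_\O d_\O\,dx>0$ while $(p-1)/p\to 1$, the second claim $T_p(\O)^{1/p}\to\int_\O d_\O\,dx$ follows at once.

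\emph{Main obstacle.} The only delicate point is the equi-coercivity in Step~2: the functions $w_p$ a priori live in different Sobolev spaces as $p$ varies, so one cannot directly invoke a compact embedding. The fix is to use the energy identity together with the asymptotic behaviour of $\lambda_p$ supplied by Corollary~\ref{coro.la} to produce $p$-independent bounds in a fixed $W^{1,q}_0(\O)$ with $q>d$, which is exactly what unlocks the compact embedding~\eqref{compact} and allows one to pass from the $\Gamma$-convergence of Step~1 to uniform convergence of the minimizers.
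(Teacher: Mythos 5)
Your argument is correct and essentially coincides with the paper's own proof: both rest on the $\Gamma$-convergence of $J_p=\Phi_p-\int_\O u\,dx$ from Proposition~\ref{Gamma2}, a uniform bound on $\|\nabla w_p\|_p$ obtained from $\lambda_p^{1/p}\to\rho(\O)^{-1}$ (your Step~2 simply re-derives the P\'olya inequality that the paper invokes), the compact embedding \eqref{compact} to upgrade to uniform convergence, and the pointwise bound \eqref{ineq} to identify the limit as $d_\O$. The only difference is organizational (you identify the limit minimizer before proving compactness), so no further comment is needed.
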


\begin{proof}
It is sufficient to show that $w_p\to d_\O$ uniformly in $\O$.  First we notice that
by \eqref{prop.polya} we get
\be\label{eq.pnorme}
\Big(|\O|^{-1}\int_\O |\nabla w_{p}|^{p}\,dx\Big)^{1/p}=\big(|\O|^{1-p}T_{p_n}(\O)\big)^{1/(p(p-1))}\le\Big(\lambda^{1/p}_{p}(\O)\Big)^{1/1-p}.
\ee
By Corollary \ref{coro.la} we have
$$C:=\sup_p|\O|^{-1/{p}}\Vert\nabla w_{p}\|_{p}<+\infty.$$    
Moreover, for every fixed $q\geq 1$ and $p$ large enough, by H\"older inequality, we have that 
\be\label{eq.rnorme}
\|\nabla w_{p}\|_q\le \|\nabla w_{p}\|_{p}|\O|^{1/q-1/{p}}\le C|\O|^{1/q}.
\ee
Let $p_n\to \infty$. By applying \eqref{eq.rnorme} we can show that there exists $w_{\infty}\in W^{1,\infty}_0(\O)$, such that $w_{p_n}$ converges uniformly to  $w_\infty$ and weakly in  $W^{1,q}(\O)$ for every $q\geq 1$.
Notice that \eqref{eq.pnorme} combined with Corollary \ref{coro.la} shows also
$$\|\nabla w_\infty\|_q\le|\O|^{1/q},$$ for every $q\geq 1$, which implies $\|w_\infty\|\le1$.

Now let $J_p$ be the functional defined in \eqref{def.Jp} and $J_\infty$ be the functional given by 
$$J_\infty(u):=\Phi_{\infty}(u)-\int_\O u\,dx.$$
Since the functional $u\mapsto\int_\O u\,dx$ is continuous with respect to the $L^1$-convergence, thanks to Proposition \ref{Gamma2}, we have that
$$\lim_{n\to\infty}J_{p_n}(w_{p_n})=J_\infty(w_\infty)=\min_{u\in W_0^{1,\infty}(\O)\,, \|\nabla u\|_\infty\le1}J_\infty(u)=-\int_\O w_\infty\,dx.$$
Moreover, by using \eqref{ineq} we have $w_\infty(x)\le d_\O(x)$. In addition, since $d_\O\in W^{1,\infty}_0(\O)$, we have also $J_\infty(w_\infty)\le J_\infty(d_\O)$, i.e. $\int_\O(w_\infty-d_\O)\,dx\ge0$. Hence $d_\O=w_\infty$. By the arbitrariness of the  sequence $p_n$, we get that $w_p\to d_{\O}$ uniformly as $p\to\infty$.
\end{proof}

\bigskip

\noindent{\bf Acknowledgments.} The work of GB is part of the project 2017TEXA3H {\it``Gradient flows, Optimal Transport and Metric Measure Structures''} funded by the Italian Ministry of Research and University. The authors are member of the Gruppo Nazionale per l'Analisi Matematica, la Probabilit\`a e le loro Applicazioni (GNAMPA) of the Istituto Nazionale di Alta Matematica (INdAM).

\bigskip

\bigskip
{\small\noindent
Luca Briani:
Dipartimento di Matematica,
Universit\`a di Pisa\\
Largo B. Pontecorvo 5,
56127 Pisa - ITALY\\
{\tt luca.briani@phd.unipi.it}

\bigskip\noindent
Giuseppe Buttazzo:
Dipartimento di Matematica,
Universit\`a di Pisa\\
Largo B. Pontecorvo 5,
56127 Pisa - ITALY\\
{\tt giuseppe.buttazzo@dm.unipi.it}\\
{\tt http://www.dm.unipi.it/pages/buttazzo/}

\bigskip\noindent
Francesca Prinari:
Dipartimento di Scienze Agrarie, Alimentari e Agro-ambientali,
Universit\`a di Pisa\\
Via del Borghetto 80,
 56124 Pisa - ITALY\\
{\tt francesca.prinari@unipi.it}

\end{document}